\def\setliststart#1{\setcounter{\@listctr}{#1}%
	\addtocounter{\@listctr}{-1}}
\newtheorem{The}{Theorem}[section]
\newtheorem{Cor}[The]{Corollary}
\newtheorem{Lem}[The]{Lemma}
\newtheorem{Pro}[The]{Proposition}
\theoremstyle{definition}
\newtheorem{defn}[The]{Definition}
\newtheorem{Rem}[The]{Remark}
\numberwithin{equation}{section}
\newcounter{Mr}
\newtheorem{Result}[Mr]{\textbf{Main Result}}
\newcommand{\R}{\mathbb{R}}
\newcommand{\N}{\mathbb{N}}
\newcommand{\SING}{\mbox{\rm Sing}\,}
\title[Global propagation of singularities for magnetic mechanical systems]{Global propagation of singularities for magnetic mechanical systems}
\author{Piermarco Cannarsa, Wei Cheng, Jiahui Hong \and Wenxue Wei\textsuperscript{*}}
\address[Piermarco Cannarsa]{Dipartimento di Matematica, Universit\`a di Roma ``Tor Vergata'', Via della Ricerca Scientifica 1, 00133 Roma, Italy}
\email{cannarsa@mat.uniroma2.it}
\address[Wei Cheng]{School of Mathematics, Nanjing University, Nanjing 210093, China}
\email{chengwei@nju.edu.cn}
\address[Jiahui Hong]{School of Mathematics, Nanjing University of Aeronautics and Astronautics, Nanjing 211106, China}
\email{hongjiahui@nuaa.edu.cn}
\address[Wenxue Wei]{School of Mathematics, Nanjing University, Nanjing 210093, China}
\email{wwx3708@gmail.com}
\subjclass[2020]{35F21, 37J51, 49L25}
\keywords{Hamilton-Jacobi equation, magnetic mechanical system, propagation of singularities, Eikonal equation}
\begin{document}
	
	\maketitle
	
	\begin{abstract}
		We prove that singularities propagate globally for viscosity solutions of Hamilton-Jacobi equations related to magnetic mechanical systems on closed Riemannian manifolds. Our main result shows that for any weak KAM solution $u$, the singular set $\text{Sing}\,(u)$ remains invariant under the generalized gradient flow dynamics. The proof combines three key elements: (1) reduction from magnetic to Riemannian systems, (2) analysis of reparameterized flows, and (3) regularization techniques. Compared to previous analytic approaches, our geometric method provides clearer insights into the underlying Riemannian structure. We also establish necessary conditions for singularity existence, particularly when the Euler characteristic is nonzero and the magnetic form is non-exact. This approach does not extend directly to Finsler metrics due to structural differences.

	\end{abstract}
	
	\section{Introduction}
	
	This paper is devoted to studying the global propagation of singularities for viscosity solutions of the Eikonal equation on standard Riemannian manifolds and extended magnetic Lagrangian systems on closed (compact without boundary) manifolds.
	
	Let \((M, g)\) be a connected complete Riemannian manifold and \(u \colon W \to \mathbb{R}\) be a viscosity solution to the Eikonal equation
	\begin{equation}\label{eq:Eikonal_intro}
		\|\nabla u(x)\|_g^2 = 1, \quad x \in W,
	\end{equation}
	where \(W \subset M\) is an open subset, and \(\|\cdot\|_g\) denotes the norm induced by the Riemannian metric \(g\). Let \(d(\cdot, \cdot)\) be the Riemannian distance associated with \(g\). The distance function to the closed set \(F = M \setminus W\), defined by
	\[
	d_F(x) = \inf\,\{\, d(x, y) : y \in F \,\}, \quad x \in M,
	\]
	is well known to yield a viscosity solution of \eqref{eq:Eikonal_intro} on \(W\).
	
	The first global propagation of singularities for the distance function was established in \cite{ACNS2013} for both Euclidean and Riemannian settings (see also \cite{Cannarsa_Mazzola_Sinestrari2015} for the viscosity solutions to a special type of evolutionary mechanical Hamilton-Jacobi equations). Recently, global propagation results were also obtained in \cite{ACCM2025} for weak KAM solutions\footnote{We call a function $u:M\to \R$ a weak KAM solution of \eqref{eq:HJ_intro} if it is a viscosity solution of \eqref{eq:HJ_intro}.} of the standard mechanical system 
	\[
	H(x,p) = \tfrac{1}{2}|p|^2 + V(x)
	\] 
	on the torus. However, these prior works provide limited geometric insight. We develop a purely geometric approach to this problem, which extends to more general magnetic systems.
	
	Let \(\omega\) be a $C^2$ 1-form (not necessarily closed) on \(M\). The magnetic Lagrangian \(L: TM \to \mathbb{R}\) is defined by
	\begin{equation}\label{eq:magnetic_L_intro}
		L(x,v) = \tfrac{1}{2} g_x(v,v) - \omega_x(v) - V(x),
	\end{equation}
	where \(V \in C^2(M)\) is a potential function. By adding a constant to \(V\), we assume without loss of generality that \(\max_{x\in M} V(x) = 0\), as constant shifts preserve the Euler-Lagrange equations.
	
	The corresponding magnetic Hamiltonian \(H: T^*M \to \mathbb{R}\) is defined as 
	\begin{equation}\label{eq:magnetic_H_intro}
		H(x,p) =\sup_{v \in T_x M} \bigl\{ \langle p, v \rangle - L(x, v) \bigr\}= \tfrac{1}{2} g_x^*(p + \omega_x, p + \omega_x) + V(x),
	\end{equation}
	where \(g^*\) denotes the dual metric on \(T^*M\). The Hamilton-Jacobi equation is:
	\begin{equation}\label{eq:HJ_intro}
		H(x, Du) = \tfrac{1}{2} \|Du + \omega\|_{g^*}^2 + V(x) = c,
	\end{equation}
	where \(c = c(L)\) is the Ma\~n\'e's critical value and \(\|\cdot\|_{g^*}\) is the dual norm induced by \(g\). The vector field \(X = \omega^\sharp\) satisfies \(\omega = g(X, \cdot)\) via musical isomorphism. The identity \(\|Du + \omega\|_{g^*}^2 = \langle \nabla u + X, \nabla u + X \rangle\) holds.
	
	To state our main result, we introduce two subsets associated with the singular set of a solution \( u \) to \eqref{eq:HJ_intro}. 
	Denote by \(\SING(u)\) the set of points where \( u \) is not differentiable. 
	A point \( x \in M \) is called a cut point of \( u \) if \( \tau_u(x) = 0 \), where
	\begin{align*}
		\tau_u(x) = \sup \biggl\{ t \geq 0 : 
		\begin{array}{c}
			\exists \text{ a } C^1 \text{ curve } \gamma \colon [0,t] \to M, \gamma(0)=x \\
			\text{such that } u(\gamma(t)) - u(x) = \displaystyle\int_0^t L(\gamma, \dot{\gamma})  \mathrm{d}s
		\end{array}
		\biggr\}.
	\end{align*} 
	It is well known that \(\SING(u) \subset \mathrm{Cut}(u)\) holds in general.
	
	\begin{Result}
		Let \(u\) be a weak KAM solution to \eqref{eq:HJ_intro} with $H$ a magnetic Hamiltonian. Consider the generalized gradient flow
		\begin{equation}\label{eq:ggf_intro}
			\dot{\gamma}(t) = H_p\bigl( \gamma(t), p_u^\#(\gamma(t)) \bigr),\,  t\in[0,+\infty)\, a.e., \quad \gamma(0) = x_0,
		\end{equation}
		where $\gamma$ is a  Lipschitz curve and the momentum selection \(p_u^\#(x)\) is defined as 
		\[
		p_u^\#(x) := \underset{p \in D^+u(x)}{\arg\min}\, H(x,p)
		\]
		with \(D^+u(x)\) the superdifferential of \(u\) at \(x\). Then for any initial point \(x_0 \in W \cap\text{Cut}\,(u)\) with \(W=\{x \in M \mid c[L] > V(x)\}\), the solution \(\gamma\) satisfies
		\[
		\gamma(t) \in \SING(u) \quad \forall t \geq 0 \text{ in the flow's domain}.
		\]
		Thus \(\SING(u)\) is invariant under the generalized gradient flow dynamics. (See Theorem \ref{thm:singular_persistence} and Corollary \ref{cor:cut to sing}.)
	\end{Result}
	
	The core steps of the proof are as follows:
	\begin{enumerate}[--]
		\item Transform the magnetic system into an equivalent Eikonal-type problem via a Riemannian metric $\tilde{g}$ which is singular on the projected Aubry set - an essential construct in weak KAM theory (cf. \cite{Fathi_book}). 
		\item This geometric transformation reduces the magnetic case to a purely Riemannian framework. The generalized gradient flow \eqref{eq:ggf_intro} undergoes only a smooth time-reparametrization.
		\item Implement the regularization technique pioneered in \cite{Cannarsa_Yu2009, Yu2006} to provide a qualitative description of the energy evolution along the generalized gradient flow through careful computation of some geometric tensors.
	\end{enumerate}
	
	To elucidate our approach, we present a detailed proof for the Riemannian case in Section~\ref{sec:Riemannian}. In contrast to prior work \cite{ACCM2025}, our proof reveals deeper geometric structure even in this fundamental setting, uncovering the intrinsic Riemannian geometry that governs singularity propagation. However,  our method does not directly extend to the general Finslerian setting due to the lack of Riemannian structure (see Remark \ref{rem:Finsler}). This presents a significant difficulty when considering generalizations to Tonelli Hamiltonian systems. 
	
	For general Tonelli Hamiltonian systems, the theory of \emph{intrinsic singular characteristics} developed in \cite{Cannarsa_Cheng3,Cannarsa_Cheng_Fathi2017,Cannarsa_Cheng_Fathi2021} yields a global propagation result for singularities. These characteristics differ fundamentally from generalized gradient flow solutions of \eqref{eq:ggf_intro}, as demonstrated in \cite{CCHW2024}. The key achievement in \cite{CCHW2024} establishes that for any weak KAM solution $u$: every solution $\gamma:[0,+\infty) \to M$ of \eqref{eq:ggf_intro} starting at $x_0 \in \mathrm{Cut}\,(u)$, the cut locus of $u$ which is a broader class than $\text{Sing}\,(u)$, satisfies $\gamma(t) \in \mathrm{Cut}\,(u)$ for all $t \geq 0$. This statement is true even when $\gamma$ is a \emph{generalized characteristic} firstly introduced in \cite{Albano_Cannarsa2002} (see also \cite{Khanin_Sobolevski2016} for related topics).
	
	For any Tonelli Hamiltonian $H \in C^2(T^*M)$ and weak KAM solution $u$ of
	\begin{align*}
		H(x, Du) = c \quad \text{(Ma\~n\'e's critical value)},
	\end{align*}
	one has the following conjecture:
	\begin{center}
		\fbox{\begin{minipage}{0.9\linewidth}
				\vspace{2mm}
				\textit{If $\gamma:[0,\infty) \to M$ is a generalized gradient flow solution for \eqref{eq:ggf_intro} with $\gamma(0) \in \mathrm{Cut}\,(u)$, does $\gamma(t) \in \mathrm{Sing}\,(u)$ hold for all $t \geq 0$?}
				\vspace{2mm}
		\end{minipage}}
	\end{center}
	This persistence of singular points is a stronger property than cut locus propagation and remains an important open problem in general. 
	
	We also establish a criterion for the existence of singularities for magnetic  with Lagrangian systems under specific topological constraints (Theorem \ref{thm:sing_exists}).
	
	\begin{Result}
		Let $(M,g)$ be a connected closed orientable surface with Euler characteristic $\chi(M) \neq 0$. Consider the Lagrangian 
		\[
		L(x,v) = \tfrac{1}{2}g_x(v,v) - \omega_x(v) - V(x)
		\]
		where $\max_{x\in M} V(x) = 0$, and let $u$ be a weak KAM solution of \eqref{eq:HJ_intro}. If $\SING(u) = \emptyset$, then $c[L] = \max V = 0$. In particular, $\SING(u) \neq \emptyset$ when $V \equiv 0$ and $\omega$ is non-exact.
	\end{Result}
	
	This paper is organized as follows. In Section~\ref{sec:pre}, we recall some basic facts about semiconcave functions on manifolds. In Section~\ref{sec:gps}, we focus on the study of the properties of global propagation of singularities on Riemannian manifolds, for Eikonal equation in subsection \ref{sec:Riemannian} and Hamilton-Jacobi equation with magnetic Hamiltonian in subsection \ref{sec:magnetic}.
	
	\medskip
	
	\noindent\textbf{Acknowledgements.} Piermarco Cannarsa was supported, in part, by the National Group for Mathematical Analysis, Probability and Applications (GNAMPA) of the Italian Istituto Nazionale di Alta Matematica ``Francesco Severi'', by the Excellence Department Project awarded to the Department of Mathematics, University of Rome Tor Vergata, CUP E83C23000330006, and by the European Union---Next Generation EU, CUP E53D23017910001. Wei Cheng is partly supported by National Natural Science Foundation of China (Grant No. 12231010). Jiahui Hong is partly supported by National Natural Science Foundation of China (Grant No. 12501245).
	
	\section{Preliminaries}\label{sec:pre}
	
	This section collects fundamental notions on semiconcave and semiconvex functions on manifolds. Standard references for this topic include the monographs \cite{Ambrosio_GigliNicola_Savare_book2008, Cannarsa_Sinestrari_book, Villani_book2009}.
	
	\begin{defn}
		Let \( U \subset \mathbb{R}^n \) be an open convex set. A function \( f: U \to \mathbb{R} \) is said to be \emph{\( K \)-semiconcave} on \( U \) if, for every \( x, y \in U \) and every \( t \in [0,1] \), the following inequality holds
		\[
		(1-t)f(x) + tf(y) - f((1-t)x + ty) \leq K \frac{t(1-t)}{2} |x - y|^2,
		\]
		where \( K \geq 0 \) is a constant. A function \( f \colon U \to \mathbb{R} \) is called \emph{\( K \)-semiconvex} on \( U \) if \( -f \) is \( K \)-semiconcave on \( U \). 
		
		We say that a function \( f \colon U \to \mathbb{R} \), defined on an open subset of \( \mathbb{R}^n \), is \emph{locally semiconcave} (respectively, \emph{locally semiconvex}) if for every point \( x \in U \), there exists an open convex neighborhood \( U_x \subset U \) of \( x \) such that the restriction \( f|_{U_x} \) is \( K(x) \)-semiconcave (respectively, \( K(x) \)-semiconvex) for some constant \( K(x) \geq 0 \).
	\end{defn}
	
	\begin{defn}
		A function \( f \colon M \to \mathbb{R} \) defined on a manifold \( M \) is called \emph{locally semiconcave} (respectively, \emph{locally semiconvex}) if for every \( x \in M \), there exists a local chart \( \phi: U \to \mathbb{R}^n \) with \( x \in U \), such that the composition \( f \circ \phi^{-1} \colon \phi(U) \to \mathbb{R} \) is locally semiconcave (respectively, locally semiconvex) on \( \phi(U) \).
	\end{defn}
	
	\begin{defn}
		Let \((M, g)\) be a geodesically complete Riemannian manifold and \(U \subset M\) an open subset. A function \(f \colon U \to \mathbb{R}\) is called \emph{geodesically \(K\)-semiconcave} on \(U\) if for every minimal geodesic \(\gamma \colon [0, 1] \to U\) and every \(t \in [0, 1]\), the following inequality holds
		\begin{equation}\label{defn of sc}
			(1-t)f(\gamma(0)) + t f(\gamma(1)) - f(\gamma(t)) \leq \frac{K}{2} t(1-t) d_g^2(\gamma(0), \gamma(1)),
		\end{equation}
		where \(K \geq 0\) is a constant and \(d_g\) denotes the Riemannian distance induced by \(g\).
	\end{defn}
	
	\begin{defn}\label{def:local_geodesic_semiconcavity}
		Let \((M, g)\) be a connected, geodesically complete Riemannian manifold and \(U \subset M\) an open subset. A function \(f \colon U \to \mathbb{R}\) is called \emph{locally geodesically semiconcave} if for every \(x \in U\), there exists
		\begin{enumerate}
			\item a geodesically convex neighborhood \(U_x \subset U\) of \(x\) (meaning any minimal geodesic segment with endpoints in \(U_x\) lies entirely in \(U_x\)),
			\item a constant \(K(x) \geq 0\),
		\end{enumerate}
		such that \(f\) satisfies the geodesic \(K(x)\)-semiconcavity condition on \(U_x\).
	\end{defn}
	
	Applying the approximation theory developed by Greene and Wu \cite{Greene_Wu1972,Greene_Wu1974}, the following result was established in \cite[Theorem 6.6]{Ohta2009} and independently in \cite{Bangert1979}.
	
	\begin{The}\label{thm:semiconcave_equivalence}
		Let $(M,F)$ be a connected geodesically complete Riemannian manifold and $f \colon M \to \mathbb{R}$ a continuous function. Then $f$ is locally semiconcave if and only if $f$ is locally geodesically semiconcave.
	\end{The}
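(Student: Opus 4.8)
The plan is to prove both implications by localizing: since semiconcavity and geodesic semiconcavity are both local properties, I would fix $p\in M$, work in a normal coordinate chart on a small geodesically convex ball $B\ni p$, and exploit three standard facts about such a chart — the metric $g$ is equivalent to the Euclidean metric, the Riemannian distance $d_g$ is comparable to the Euclidean distance $|\cdot|$, and the Christoffel symbols together with their first derivatives are bounded on $\overline B$. The geometric heart of the argument is a comparison lemma: for any $x,y\in B$, the minimal geodesic $\gamma_{xy}\colon[0,1]\to B$ from $x$ to $y$ differs from the affine segment $\ell_{xy}(t)=(1-t)x+ty$ by
\[
\big|\gamma_{xy}(t)-\ell_{xy}(t)\big|\le C\,t(1-t)\,|x-y|^2,\qquad t\in[0,1],
\]
with $C$ uniform over $B$. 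I would prove this by writing the geodesic equation for the defect $\delta(t)=\gamma_{xy}(t)-\ell_{xy}(t)$, noting $\delta(0)=\delta(1)=0$ and $|\dot\gamma_{xy}|\le C'|x-y|$, so that $\delta$ solves a second-order linear ODE with forcing of size $O(|x-y|^2)$; a barrier/maximum-principle comparison with the function $t\mapsto t(1-t)$ then yields the stated estimate.

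For the implication ``geodesically semiconcave $\Rightarrow$ semiconcave,'' I would first record that a geodesically $K$-semiconcave function is locally Lipschitz: restricting $f$ to a unit-speed minimal geodesic and using that sub-arcs of minimal geodesics are minimal, one checks that $s\mapsto f(\gamma(s))-\tfrac{K}{2}s^2$ is concave in arc length, hence locally Lipschitz with a constant controlled by $\mathrm{osc}(f)$ and $K$; taking the supremum over geodesic directions gives a local Lipschitz bound for $f$ itself. Then, shrinking $B$ so that $B$ lies inside the Lipschitz domain, I would split
\[
(1-t)f(x)+tf(y)-f(\ell_{xy}(t)) = \Big[(1-t)f(x)+tf(y)-f(\gamma_{xy}(t))\Big] + \Big[f(\gamma_{xy}(t))-f(\ell_{xy}(t))\Big],
\]
bound the first bracket by $\tfrac{K}{2}t(1-t)d_g^2(x,y)$ (geodesic semiconcavity) and the second by $\mathrm{Lip}(f)\,|\gamma_{xy}(t)-\ell_{xy}(t)|\le \mathrm{Lip}(f)\,C\,t(1-t)|x-y|^2$ (the comparison lemma), and finally use $d_g^2(x,y)\le C''|x-y|^2$ to conclude Euclidean $K'$-semiconcavity on $B$.

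For the converse, ``semiconcave $\Rightarrow$ geodesically semiconcave,'' I would use approximation. In the chart, $K$-semiconcavity of $f$ means $D^2 f\le K\,\mathrm{Id}$ in the distributional sense on a convex subneighborhood; mollifying there produces smooth $f_\varepsilon\to f$ locally uniformly with $D^2 f_\varepsilon\le K\,\mathrm{Id}$ pointwise and $\mathrm{Lip}(f_\varepsilon)$ bounded uniformly in $\varepsilon$. Converting to the covariant Hessian via $\mathrm{Hess}^g f_\varepsilon(v,v)=\partial^2_{ij}f_\varepsilon\,v^iv^j-\Gamma^k_{ij}\,\partial_k f_\varepsilon\,v^iv^j$ and using the metric equivalence, the uniform Lipschitz bound, and boundedness of the Christoffel symbols, one gets $\mathrm{Hess}^g f_\varepsilon\le K' g$ on the chart, with $K'$ independent of $\varepsilon$. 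For a smooth function with $\mathrm{Hess}^g f_\varepsilon\le K' g$, differentiating twice along a constant-speed minimal geodesic $\gamma$ (so $|\dot\gamma|^2=d_g^2(\gamma(0),\gamma(1))$ and $\nabla_{\dot\gamma}\dot\gamma=0$) shows $t\mapsto f_\varepsilon(\gamma(t))-\tfrac{K'}{2}t^2 d_g^2(\gamma(0),\gamma(1))$ is concave, which rearranges exactly to the geodesic $K'$-semiconcavity inequality for $f_\varepsilon$ on a geodesically convex subneighborhood $U_p$; letting $\varepsilon\to0$ and using uniform convergence on $\overline{U_p}$ transfers the inequality to $f$. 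One may instead invoke the Greene--Wu smoothing theory to obtain coordinate-free smooth approximants, as in the references cited above; plain mollification is enough here since only Hessian control in one chart is needed.

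The step I expect to be the main obstacle is the comparison lemma: what is needed is not merely that the geodesic is $O(|x-y|^2)$-close to the segment in sup norm, but that the defect vanishes like $t(1-t)$ at the endpoints with a constant uniform in $x,y\in B$ — this precise shape is what makes both the quadratic modulus and the $t(1-t)$ weight in the semiconcavity inequalities come out correctly, and it is essentially the only place where genuine ODE/comparison estimates (rather than routine bookkeeping with metric equivalence and bounded curvature quantities) enter. A secondary, softer point is verifying that mollification preserves the distributional Hessian bound together with a uniform Lipschitz bound; this is standard but should be stated with care, since the final limiting argument depends on it.
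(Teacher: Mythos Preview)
The paper does not prove this theorem; it records it as a known result due to Ohta (2009, Theorem~6.6) and Bangert (1979), pointing to the Greene--Wu approximation theory as the underlying tool. There is therefore no in-paper argument to compare against.

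Your proposal is a correct self-contained proof, and it differs in spirit from the cited route. The Greene--Wu method smooths $f$ \emph{intrinsically} on the manifold --- averaging via the exponential map over small geodesic balls --- and shows that this Riemannian mollification preserves geodesic (semi)concavity while yielding $C^2$ approximants; the equivalence then reduces to the elementary fact that for $C^2$ functions both notions amount to the same Hessian bound. You instead work entirely in one normal chart and treat the two directions asymmetrically: the comparison lemma $|\gamma_{xy}(t)-\ell_{xy}(t)|\le C\,t(1-t)|x-y|^2$ together with local Lipschitz continuity handles ``geodesic $\Rightarrow$ Euclidean,'' and ordinary Euclidean mollification plus the Christoffel correction $\mathrm{Hess}^g f_\varepsilon(v,v)=\partial^2_{ij}f_\varepsilon\,v^iv^j-\Gamma^k_{ij}\partial_k f_\varepsilon\,v^iv^j$ handles the converse. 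Your approach is more elementary and stays within a single chart; the Greene--Wu approach is coordinate-free and is the right tool when one actually needs a global smooth approximation with controlled Riemannian Hessian. For the present, purely local, statement either argument suffices.
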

	\begin{defn}
		Let \( f: M \rightarrow \mathbb{R} \) be a function. We say that \( p \in T^{\ast}_{x}M \) is a superdifferential of \( f \) at \( x \in M \), and we write \( p \in D^{+}f(x) \), if there exists a function \( g: V \rightarrow \mathbb{R} \), defined on some open subset \( U \subset M \) containing \( x \), such that \( g \geq f \), and \( g \) is differentiable at \( x \) with \( D g(x) = p \). 
		
		Similarly, we say that \( p \in T^{\ast}_{x}M \) is a subdifferential of \( f \) at \( x \in M \), and we write \( p \in D^{-}f(x) \), if there exists a function \( g: V \rightarrow \mathbb{R} \), defined on some open subset \( U \subset M \) containing \( x \), such that \( g \leq f \), and \( g \) is differentiable at \( x \) with \( D g(x) = p \).
	\end{defn}
	
\begin{Rem}\label{basic property of sc}
\hfill
		\begin{enumerate}[\rm (1)]
			\item Suppose \( f \colon M \to \mathbb{R} \) is locally semiconcave. Then \( D^{+}f(x) \neq \varnothing \) for all \( x \in M \), and the directional derivative satisfies $\partial f(v_x) = \min\{ p(v_x): p \in D^{+}f(x)\}$. We call $p\in D^*f(x)$, the set of reachable differentials, if there exists a sequence $x_n\to x$ as $k\to +\infty$ with $f$ is differentiable at each $x_n$ and $p=\lim\limits_{n\to +\infty} Df(x_n)$. Furthermore, we have $D^*f(x)\subset D^+f(x)$ and $\operatorname{co}D^*f(x)= D^+f(x)$.
			\item Let \( (M,g) \) be a Riemannian manifold and \( f: M \rightarrow \mathbb{R} \) be a function. The superdifferential \( D^+f(x) \) (resp. subdifferential \( D^-f(x) \)) corresponds to the supergradient \( \nabla^+f(x) \) (resp. subgradient \( \nabla^-f(x) \)) with respect to the Riemannian inner product.
			\item For the magnetic Lagrangian \( L(x,v) = \tfrac{1}{2} g_x(v,v) - \omega_x(v) - V(x) \), the generalized gradient flow of \eqref{eq:ggf_intro} is given by
			\begin{align*}
				\dot{\gamma}(t) = H_p\bigl( \gamma(t), p^\#_u(\gamma(t)) \bigr) = \nabla^\# u(\gamma(t)) + X(\gamma(t)), \quad \gamma(0) = x_0,
			\end{align*}
			where \( \nabla^\# u = (p^\#_u)^\sharp \) and \(X=\omega^\sharp\).
			\item Suppose \( f \colon U \to \mathbb{R} \) is geodesically \( K \)-semiconcave on \( U \). Then for each \( v \in \nabla^+f(x) \) and every minimizing geodesic \( \gamma \colon [0, 1] \to U \) connecting \( x=\gamma(0) \) and \( \gamma(1) \), we have
			\begin{equation}\label{variation of sc}
				f(\gamma(1)) - f(\gamma(0)) \leq \langle v, \dot{\gamma}(0) \rangle + \frac{K}{2} d_g^2(\gamma(0), \gamma(1)).
			\end{equation}
			To see this, rearrange \eqref{defn of sc} to obtain
			\[
			f(\gamma(1)) - f(\gamma(0)) \leq \frac{f(\gamma(t)) - f(\gamma(0))}{t} + \frac{K}{2}(1 - t) d_g^2(\gamma(0), \gamma(1)).
			\]
			Taking \( t \to 0^+ \) and recalling (1) yields \eqref{variation of sc}.
		\end{enumerate}
	\end{Rem}
	
	\begin{Lem}[\cite{Cannarsa_Yu2009}]\label{uniform semiconcave}
		Let \( U \subset \mathbb{R}^n \) be an open set. If \(\{\psi_i \mid \psi_i: U \rightarrow \mathbb{R}, i \in \mathbb{N}\}\) is a sequence of \(K\)-semiconcave functions converging uniformly to a function \(\psi: U \rightarrow \mathbb{R}\), then \(\psi\) is \(K\)-semiconcave. Furthermore, if \(\{x_i\} \subset U\) is a sequence converging to \(x \in U\), then for every limit point \(p\) of \(\{p_i\}\) with \(p_i \in D^{+} \psi_i(x_i)\), we have \(p \in D^{+} \psi(x)\).
	\end{Lem}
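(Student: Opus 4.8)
The plan is to handle the two assertions by passing to the limit in pointwise inequalities; the only delicate bookkeeping concerns the moving base point $x_i$, and it is precisely there that uniform convergence (rather than mere pointwise convergence) is used.

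First, for the $K$-semiconcavity of $\psi$, I would simply fix $x,y\in U$ with $[x,y]\subset U$ and $t\in[0,1]$, write the defining inequality
\[
(1-t)\psi_i(x)+t\psi_i(y)-\psi_i\bigl((1-t)x+ty\bigr)\le K\,\tfrac{t(1-t)}{2}\,|x-y|^2
\]
for each $i$, and let $i\to\infty$; pointwise convergence already gives the same inequality for $\psi$. (Localizing to a convex neighbourhood when $U$ is merely open is harmless, since the restriction of a $K$-semiconcave function to a convex subset is still $K$-semiconcave.)

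For the second assertion I would first recall the standard quadratic characterization of the superdifferential of a $K$-semiconcave function $f$ on a convex set: one has $q\in D^+f(z)$ if and only if $f(w)\le f(z)+\langle q,w-z\rangle+\tfrac{K}{2}|w-z|^2$ for all $w$ near $z$. The ``if'' direction is immediate, as the right-hand side is a smooth function lying above $f$, touching it at $z$, with differential $q$, hence a legitimate test function in the definition of $D^+$. The ``only if'' direction follows by writing $h:=f-\tfrac{K}{2}|\cdot|^2$, which is concave, noting $D^+f(z)=D^+h(z)+Kz$, and using the linear support inequality for $h$ together with the identity $|w|^2-|z|^2-2\langle z,w-z\rangle=|w-z|^2$. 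Granting this, pass to a subsequence so that $p_i\to p$, apply the ``only if'' direction to $\psi_i$ at $x_i$ with $q=p_i$ to get, for all $w$ in a fixed convex neighbourhood $V\ni x$ (which contains $x_i$ for large $i$),
\[
\psi_i(w)\le \psi_i(x_i)+\langle p_i,w-x_i\rangle+\tfrac{K}{2}|w-x_i|^2 ,
\]
and let $i\to\infty$. The crucial term is $\psi_i(x_i)$: by uniform convergence $\sup_{V}|\psi_i-\psi|\to0$, and by continuity of the (semiconcave, hence continuous) limit $\psi$ we have $\psi(x_i)\to\psi(x)$, so $\psi_i(x_i)\to\psi(x)$; the remaining terms converge trivially. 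This yields $\psi(w)\le\psi(x)+\langle p,w-x\rangle+\tfrac{K}{2}|w-x|^2$ on $V$, and the ``if'' direction then gives $p\in D^+\psi(x)$ (and $p$ does not depend on the chosen subsequence).

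I do not expect a genuine obstacle here; the argument is a routine stability-under-limits computation. The one point that must be handled with care is the term $\psi_i(x_i)$ in the last step: it is here, and only here, that the hypothesis of \emph{uniform} convergence is essential, because the evaluation point drifts with $i$, and everything else goes through by localizing to a convex neighbourhood of $x$ and invoking continuity of $\psi$.
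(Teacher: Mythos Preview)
The paper does not give its own proof of this lemma; it is quoted from \cite{Cannarsa_Yu2009} and stated without argument. Your proof is correct and is the standard one: the first assertion follows by passing to the limit in the defining inequality, and the second by using the quadratic-support characterization of $D^{+}$ for a $K$-semiconcave function and letting $i\to\infty$, with uniform convergence used exactly where you say, to control the drifting term $\psi_i(x_i)$.
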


	\begin{Lem}[{\cite[Lemma 2.1]{Cannarsa_Yu2009}}]\label{modify}
		Let \( U \subset \mathbb{R}^n \) be an open set such that $U\subset 
			\bar U \subset \Omega$ with $\Omega$ a bounded open subset of $\R^n$ and let $u:\Omega\to \R^n$ be a semiconcave function. For each $p_0\in D^+u(x_0)$ with   $x_0\in U$, there exists a sequence of smooth modifiers $\{\eta^m \mid m\in \N^+\}$ such that $\eta^m \geq 0$ with $\operatorname{supp}(\eta^m) \subset B(0,1/m)$ and $\int \eta^m dx = 1$, as well as the modified sequence $$\biggr\{u^m(x)= \eta^m*u(x)=\int \eta^m(y)u(x-y)dy:  m\in \N^+ \biggr\}$$ 
			satisfying
			\begin{enumerate}[\rm(1)]
				\item $\lim\limits_{m\rightarrow \infty} u^m(\cdot ) =u(\cdot )$ uniformly in $\bar U$,
				\item $\max\limits_{x\in \bar U}  |D u^m(x)|\leq C, \max\limits_{x\in \bar U}  |D^2 u^m(x)|\leq C$,
				\item $\lim\limits_{m\rightarrow \infty} D u^m(x_0)=p_0$.
		\end{enumerate}
	\end{Lem}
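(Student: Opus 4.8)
The plan is to build the mollifiers as convex combinations of bumps, each sampling $u$ on a small ball where its gradient is nearly a prescribed reachable differential of $u$ at $x_0$, with the weights read off from a Carath\'eodory decomposition of $p_0$. To set up, note that by compactness of $\bar U$ and local semiconcavity there is an open set $U'$ with $\bar U\subset U'\subset\overline{U'}\subset\Omega$ on which $u$ is $K$-semiconcave and $L$-Lipschitz for suitable $K,L\ge0$. For every $m$ large enough that $B(x,1/m)\subset U'$ for all $x\in\bar U$, \emph{any} admissible choice of $\eta^m$ (smooth, $\eta^m\ge0$, $\int\eta^m=1$, $\operatorname{supp}\eta^m\subset B(0,1/m)$) already yields (1) and the relevant bounds in (2): $u^m=\eta^m*u\to u$ uniformly on $\bar U$ by uniform continuity of $u$ on $U'$; $\|Du^m\|_{L^\infty(\bar U)}\le L$ since $Du^m$ is an $\eta^m$-average of $Du$; and $D^2u^m\le K\,I$ on $\bar U$ because convolution with a probability density preserves $K$-semiconcavity (mollify the concave function $u-\tfrac{K}{2}|x|^2$ and restore an affine correction), while for each fixed $m$ the smooth function $u^m$ is trivially $C^2$-bounded on the compact set $\bar U$. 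Thus (1)--(2) hold for every admissible $\eta^m$, and the entire issue is to arrange (3).

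For (3), recall from Remark~\ref{basic property of sc}(1) that $D^+u(x_0)=\operatorname{co}D^*u(x_0)$; hence, by Carath\'eodory's theorem, $p_0=\sum_{i=0}^{n}\lambda_i p_i$ with $\lambda_i\ge0$, $\sum_i\lambda_i=1$, and $p_i\in D^*u(x_0)$. Fix $i$ with $\lambda_i>0$. By definition of the reachable differentials there are differentiability points $z_k^i\to x_0$ with $q_k^i:=Du(z_k^i)\to p_i$ as $k\to\infty$. The crucial observation is that the set-valued map $x\mapsto D^+u(x)$ is upper semicontinuous for semiconcave $u$ (a standard fact, see \cite{Cannarsa_Sinestrari_book}) and collapses to the singleton $\{q_k^i\}$ at the differentiability point $z_k^i$; therefore, for each $k$ there is $r_k^i>0$ such that $|Du(z)-q_k^i|<1/k$ for a.e.\ $z\in B(z_k^i,r_k^i)$. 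Now, for each large $m$ choose $k=k(m)$ with $|x_0-z_{k(m)}^i|<1/(2m)$, shrink $r_{k(m)}^i$ below $1/(2m)$, and let $\eta^m_i$ be a smooth probability density with $\operatorname{supp}\eta^m_i\subset B\bigl(x_0-z_{k(m)}^i,\,r_{k(m)}^i\bigr)\subset B(0,1/m)$. Writing $u^m_i=\eta^m_i*u$ and changing variables gives $Du^m_i(x_0)=\int\eta^m_i(x_0-z)\,Du(z)\,dz$, an $\eta^m_i$-average of $Du$ over $B(z_{k(m)}^i,r_{k(m)}^i)$, so $|Du^m_i(x_0)-q_{k(m)}^i|<1/k(m)$ and hence $Du^m_i(x_0)\to p_i$.

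Finally, set $\eta^m:=\sum_{i:\lambda_i>0}\lambda_i\,\eta^m_i$ (the balls may overlap, which is harmless). This is a smooth probability density supported in $B(0,1/m)$, so (1)--(2) hold by the setup above, while linearity of convolution gives $u^m=\sum_i\lambda_i u^m_i$, hence $Du^m(x_0)=\sum_i\lambda_i\,Du^m_i(x_0)\to\sum_i\lambda_i p_i=p_0$, which is (3). I expect the only genuinely delicate point to be the passage from ``$Du$ is close to $p_i$ at the single point $z_k^i$'' to ``$Du$ is uniformly close to $q_k^i$ on an entire ball $B(z_k^i,r_k^i)$ of positive measure'': proximity at one point does not suffice, and this is exactly where upper semicontinuity of the superdifferential of a semiconcave function (equivalently, continuity of $Du$ along its differentiability points) enters. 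Everything else is bookkeeping with Carath\'eodory's decomposition and the standard estimates for mollifications.
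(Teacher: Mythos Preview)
The paper does not supply its own proof of this lemma; it is quoted from \cite{Cannarsa_Yu2009}. Your construction is precisely the standard one from that reference and is correct: Carath\'eodory-decompose $p_0=\sum_{i}\lambda_ip_i$ with $p_i\in D^*u(x_0)$, center each bump $\eta^m_i$ so that $(\eta^m_i*Du)(x_0)$ averages $Du$ over a small ball around a differentiability point where upper semicontinuity of $x\mapsto D^+u(x)$ forces $Du$ to be uniformly close to $p_i$, and set $\eta^m=\sum_i\lambda_i\eta^m_i$.

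One remark on item~(2): as you implicitly observe, only the one-sided bound $D^2u^m\le K\,I$ is uniform in $m$ (convolution with a probability density preserves $K$-semiconcavity); a uniform two-sided bound $|D^2u^m|\le C$ is impossible for a general semiconcave $u$ --- for $u(x)=-|x|$ on $\mathbb{R}$ one gets $(u^m)''(0)=-2\eta^m(0)\to-\infty$ for standard rescaled mollifiers. This is an imprecision in the lemma as stated here rather than a gap in your argument, and the paper's subsequent applications (Theorems~\ref{thm:singularity_persistence} and~\ref{thm:singular_persistence}) use only the upper Hessian bound.
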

	
\begin{Pro}[{\cite[Theorem 14.1]{Villani_book2009}}]\label{thm:second_differentiability}
		Let $\psi \colon M \to \mathbb{R}$ be a locally geodesically semiconcave function on a Riemannian manifold $(M,g)$. Then for almost every $x \in M$, we have:
		\begin{enumerate}[\rm (1)]
			\item $\psi$ is differentiable at $x$;
			\item There exists a symmetric operator $A \colon T_xM \to T_xM$ satisfying either of the following equivalent conditions:
			\begin{enumerate}[\rm (a)]
				\item For any $v \in T_xM$, $\nabla_v (\nabla \psi)(x) = A v$ \footnote{For any vector field $\xi$ with $\xi(y) \in \nabla^- \psi(y)$ for all $y$, we have $\nabla_v \xi(x) = A v$.};
				\item $\psi$ admits the second-order expansion:
				\[
				\psi(\exp_x v) = \psi(x) + \langle \nabla \psi(x), v \rangle + \frac{1}{2} \langle A v, v \rangle + o(|v|^2) \quad \text{as } v \to 0.
				\]
			\end{enumerate}
		\end{enumerate}
		The operator $A$ is denoted by $\nabla^2 \psi(x)$ and called the Hessian of $\psi$ at $x$. The associated quadratic form may also be referred to as the Hessian when no confusion arises.
\end{Pro}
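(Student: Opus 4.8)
The assertion is local and its conclusion transforms covariantly under changes of chart, so the plan is to reduce to the Euclidean Alexandrov theorem in a chart and then push the resulting second-order expansion forward along the exponential map. First I would localize: fix $x_0 \in M$, and by the definition of local (geodesic) semiconcavity together with Theorem~\ref{thm:semiconcave_equivalence}, choose a chart $\phi\colon U \to \R^n$ with $x_0 \in U$ such that the representative $\bar\psi := \psi\circ\phi^{-1}$ is $K$-semiconcave on some open convex $\Omega \ni \phi(x_0)$. In particular $\bar\psi$ is locally Lipschitz, hence differentiable a.e.\ by Rademacher; covering $M$ by countably many such charts and using that $\phi$ and $\phi^{-1}$ preserve null sets yields part~(1) for a.e.\ $x\in M$.

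Next I would invoke the classical second-order differentiability theorem. Since $\bar\psi$ is $K$-semiconcave on $\Omega$, the function $c(y) := \bar\psi(y) - \tfrac{K}{2}|y|^2$ is concave on $\Omega$, and Alexandrov's theorem for concave functions on $\R^n$ gives a full-measure set of $y\in\Omega$ at which $\nabla c$ is (classically) differentiable and
\[
c(y+h) = c(y) + \nabla c(y)\cdot h + \tfrac12 h^{\top} B(y)\,h + o(|h|^2),\qquad B(y)=B(y)^{\top}\le 0 .
\]
Adding back the smooth quadratic term, at such $y$ one gets $\bar\psi(y+h) = \bar\psi(y) + \nabla\bar\psi(y)\cdot h + \tfrac12 h^{\top}\bar A(y)h + o(|h|^2)$ with $\bar A(y) := B(y) + K\,\mathrm{Id}$ symmetric. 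I would then record the elementary stability lemma: if $f\colon\R^n\to\R$ admits such a second-order expansion at $y$ and $\Phi$ is $C^2$ with $\Phi(0)=y$, then, writing $h(v)=\Phi(v)-y = \Phi'(0)v + \tfrac12\Phi''(0)(v,v) + o(|v|^2)$ and using only $|h(v)|=O(|v|)$ and the $o(|h|^2)$ form of the remainder, a direct substitution gives
\[
f(\Phi(v)) = f(y) + \nabla f(y)\cdot\Phi'(0)v + \tfrac12\Bigl[(\Phi'(0)v)^{\top}\bar A(y)(\Phi'(0)v) + \nabla f(y)\cdot\Phi''(0)(v,v)\Bigr] + o(|v|^2).
\]

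Finally I would transfer back to $M$. Let $x=\phi^{-1}(y)$ be a point at which the expansion holds, and apply the lemma with $\Phi(v)=\phi\bigl(\exp_x(\mathrm d\phi_x^{-1}v)\bigr)$; expanding the geodesic in the chart gives $\Phi(v) = y + v - \tfrac12\Gamma^k_{ij}(y)v^iv^j e_k + o(|v|^2)$, so $\Phi'(0)=\mathrm{Id}$ and $\Phi''(0)(v,v)=-\Gamma^k_{ij}(y)v^iv^j e_k$, where $\Gamma^k_{ij}$ are the Christoffel symbols of $g$ in the chart. Substituting, and noting $\langle\nabla\psi(x),\xi\rangle = \partial_k\bar\psi(y)\,v^k$ and $|\xi|_g^2 \asymp |v|^2$ for $\xi=\mathrm d\phi_x^{-1}v$, the quadratic part becomes $\tfrac12\bigl(\bar A_{ij}(y) - \partial_k\bar\psi(y)\,\Gamma^k_{ij}(y)\bigr)v^iv^j$, i.e.\ precisely the coordinate form of the intrinsic Hessian with $\partial_i\partial_j\psi$ replaced by the Alexandrov matrix $\bar A_{ij}$. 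This yields the expansion in~(2)(b) with a symmetric operator $A=\nabla^2\psi(x)$ (symmetry coming from that of $\bar A(y)$ and of $\Gamma^k_{ij}$ in $i,j$). For (2)(a), observe that classical differentiability of $\nabla c$ at $y$, combined with smoothness of $g^{kl}$, gives classical differentiability of the vector field $\nabla\psi=g^{kl}\partial_l\bar\psi\,\partial_k$ at $x$; computing $\nabla_v(\nabla\psi)(x) = v^i\bigl(\partial_i(\nabla\psi)^k + \Gamma^k_{il}(\nabla\psi)^l\bigr)\partial_k$ and comparing with the matrix above verifies $\nabla_v(\nabla\psi)(x)=Av$, hence the equivalence of (a) and (b); the footnote variant with a selection $\xi(y)\in\nabla^-\psi(y)$ follows because at a twice-differentiable point such $\xi$ agrees with $\nabla\psi$ to first order.

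\textbf{Main obstacle.} The only genuinely deep ingredient is Alexandrov's second-order differentiability theorem in $\R^n$, which I would cite rather than reprove (e.g.\ via monotonicity of $\nabla c$ and Lebesgue differentiation); the remaining effort is the careful bookkeeping in the composition/stability lemma and the identification of the corrected Euclidean Hessian with the covariant Hessian. The point requiring attention is that $\psi$ is merely Lipschitz, so one may not differentiate term by term: all $o(|v|^2)$ remainders must be controlled uniformly through the substitution $h=h(v)$, which is exactly what the stated lemma isolates.
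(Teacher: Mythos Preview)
The paper does not prove this proposition at all: it is quoted verbatim from \cite[Theorem~14.1]{Villani_book2009} and no argument is supplied. Your proposal is a correct sketch of the standard proof---reduce to the Euclidean Alexandrov theorem in a chart (via Theorem~\ref{thm:semiconcave_equivalence}), then push the second-order expansion forward along $\exp_x$ using the coordinate expansion $\exp_x(v)=x+v-\tfrac12\Gamma^k_{ij}v^iv^je_k+o(|v|^2)$ to recover the intrinsic Hessian $(\operatorname{Hess}\psi)_{ij}=\bar A_{ij}-\Gamma^k_{ij}\partial_k\bar\psi$---and is essentially what one finds in the cited reference. One small point worth tightening: for the footnote assertion about an arbitrary selection $\xi(y)\in\nabla^-\psi(y)$, you need the \emph{strong} form of Alexandrov's theorem, namely that at a point $x$ of twice differentiability the entire subdifferential satisfies $\partial\psi(y)\subset\{\nabla\psi(x)+A(y-x)\}+o(|y-x|)$ (not merely the Taylor expansion of $\psi$ itself); this is what forces every selection to have covariant derivative $A$ at $x$, and it is part of the classical statement but should be invoked explicitly.
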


	\section{Global propagation of singularities}\label{sec:gps}
	
	This section is devoted to the study of global singularity propagation, with particular focus on both Riemannian metrics and magnetic Lagrangians.
	
	Before proving the invariance of under the generalized gradient flow \eqref{eq:ggf_intro}, we first establish the uniqueness of solutions to the following differential inclusion associated with magnetic Lagrangians.
		
	\begin{Pro}\label{pro:uniqueness}
		Given a magnetic Lagrangian \( L(x,v) = \frac{1}{2} g_x(v,v) - \omega(v) - V(x) \) and a locally semiconcave function $u$, the local Lipschitz solution to the differential inclusion 
			\begin{equation}\label{differential inclusion}		
				\dot\gamma(t)\in \{ H_p(\gamma(t),p) \mid p\in D^+u(\gamma(t))\}, t\in[0,+\infty)\, a.e., \quad  \gamma(0)=x_0	\end{equation}
			is unique, where $H$ is the Hamiltonian associated with $L$. In particular,  the solution to the  generalized gradient flow defined in \eqref{eq:ggf_intro} is unique.
	\end{Pro}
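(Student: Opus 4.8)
The plan is to exploit the special structure of the magnetic Hamiltonian, namely that $H_p(x,p) = (p+\omega_x)^\sharp = \nabla^\# $-type field, which is an \emph{affine} function of $p$ with the constant linear part $g_x^*$. Concretely, writing $v(x,p) = H_p(x,p)$, we have $v(x,p) - v(x,q) = (p-q)^\sharp$ for $p,q \in T_x^*M$, so the magnetic term $\omega$ and the potential $V$ play no role in the velocity differences. This reduces the uniqueness question for \eqref{differential inclusion} to the same kind of one-sided Lipschitz (monotonicity) estimate that drives uniqueness for gradient flows of semiconcave functions. First I would fix two local Lipschitz solutions $\gamma_1,\gamma_2$ of \eqref{differential inclusion} with $\gamma_1(0)=\gamma_2(0)=x_0$, work in a single coordinate chart on a short time interval (covering $[0,T]$ by finitely many such charts suffices by compactness of $\gamma_i([0,T])$), and consider $\phi(t) = \tfrac12 |\gamma_1(t)-\gamma_2(t)|^2$ in those coordinates, or better its Riemannian analogue $\phi(t) = \tfrac12 d_g^2(\gamma_1(t),\gamma_2(t))$.

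The key step is the differential inequality $\dot\phi(t) \le C\,\phi(t)$ for a.e.\ $t$, after which Gronwall and $\phi(0)=0$ force $\gamma_1 \equiv \gamma_2$. To obtain it, at a.e.\ $t$ pick $p_i \in D^+u(\gamma_i(t))$ with $\dot\gamma_i(t) = H_p(\gamma_i(t),p_i)$. In coordinates, $\dot\phi(t) = \langle \gamma_1-\gamma_2, \dot\gamma_1 - \dot\gamma_2\rangle$ plus chart-dependent lower-order terms bounded by $C\phi$; the principal term splits as
\[
\langle \gamma_1-\gamma_2,\; H_p(\gamma_1,p_1) - H_p(\gamma_2,p_1)\rangle + \langle \gamma_1-\gamma_2,\; H_p(\gamma_2,p_1) - H_p(\gamma_2,p_2)\rangle .
\]
The first bracket is $O(|\gamma_1-\gamma_2|^2)$ because $x\mapsto H_p(x,p_1)$ is $C^1$ (here $p_1$ must be transported between the two cotangent fibers, which again costs only $C\phi$). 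For the second bracket, affineness gives $H_p(\gamma_2,p_1) - H_p(\gamma_2,p_2) = (p_1-p_2)^{\sharp_{\gamma_2}}$, so the term equals $\langle (\gamma_1-\gamma_2)^{\flat}, p_1 - p_2\rangle$, and since $\gamma_1-\gamma_2$ is (to leading order, via $\exp$) the initial velocity of the minimal geodesic from $\gamma_2(t)$ to $\gamma_1(t)$ and its reverse, the semiconcavity inequality \eqref{variation of sc} applied at $\gamma_1(t)$ with $p_1$ and at $\gamma_2(t)$ with $p_2$, added together, bounds this by $\tfrac{K}{2}d_g^2(\gamma_1,\gamma_2) + \tfrac{K}{2}d_g^2(\gamma_2,\gamma_1) = K\phi(t)$. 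Summing, $\dot\phi \le C\phi$.

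I expect the main obstacle to be bookkeeping the chart/metric corrections rigorously: the "second bracket" argument is cleanest intrinsically (pairing a cotangent difference with the log map), but $D^+u$ lives in different fibers at $\gamma_1(t)$ and $\gamma_2(t)$, so one must either parallel-transport $p_1$ along the minimal geodesic and control the commutator with $H_p$, or do everything in a fixed chart and absorb the Christoffel-symbol terms into the $C\phi$ error — both routine but error-prone. A secondary point is measurability of the selections $p_i(t)$ and the existence of $\dot\phi(t)$ a.e., which follows from Lipschitz regularity of $\gamma_i$ and of $d_g^2$ away from the cut locus together with the fact that $\phi$ is itself Lipschitz; one argues at points of differentiability of all the relevant maps. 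Once $\dot\phi \le C\phi$ is in hand, the conclusion for \eqref{eq:ggf_intro} is immediate since $p_u^\#(\gamma(t)) \in D^+u(\gamma(t))$, so any solution of the generalized gradient flow is a solution of \eqref{differential inclusion}.
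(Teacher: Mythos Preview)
Your proposal is correct and follows essentially the same approach as the paper: a Gronwall argument in which the key monotonicity comes from adding the semiconcavity inequality \eqref{variation of sc} applied at both endpoints of the minimal geodesic joining $\gamma_1(t)$ and $\gamma_2(t)$, while the remaining term is controlled by the smoothness of the drift $X=\omega^\sharp$. The paper's version differs only cosmetically: it works directly with $d_g(\gamma_1,\gamma_2)$ rather than $\tfrac12 d_g^2$, and it splits $H_p(\gamma_i,p_i)=v_i+X(\gamma_i)$ with $v_i\in\nabla^+u(\gamma_i)$ at each point (handling the $X$ piece via parallel transport along the geodesic), which sidesteps the fiber-transport bookkeeping you correctly flagged as the main nuisance in your decomposition.
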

	
	\begin{proof}
 The existence of a Lipschitz curve satisfying \eqref{differential inclusion} is proved in \cite{CCHW2024,Cannarsa_Yu2009, Yu2006}.
		Suppose there exist $x_0 \in M$ and $t > 0$ such that two distinct Lipschitz curves $\gamma_1, \gamma_2 \colon [0,t] \to M$ 
		satisfy 
		$$  \dot\gamma_i(t)\in \{ H_p(\gamma_i(t),p) \mid p\in D^+u(\gamma_i(t))\}, t\geq0, \quad  \gamma_i(0)=x_0, i=1,2    $$
		and $ \sup_{s\in[0,t]} d_g(\gamma_1(s), \gamma_2(s)) > 0$.
		
		By reducing $t>0$ if necessary, we may assume $\gamma_i([0,t]) \subset U_x$ for $i = 1,2$, where $U_x$ is a geodesically convex neighborhood of $x$, and that $\sup_{s\in[0,t]} d(\gamma_1(s), \gamma_2(s)) \leq 1$.
		
		For each \( s \in (0,t] \), let \(\eta \colon [0,1] \to U\) be a geodesic satisfying \(\eta(0) = \gamma_1(s)\) and \(\eta(1) = \gamma_2(s)\). Then
		\begin{align*}
			\frac{d}{ds} d_g(\gamma_1(s), \gamma_2(s)) 
			&= \bigl\langle \nabla_1 d_g(\gamma_1(s), \gamma_2(s)), \dot{\gamma}_1(s) \bigr\rangle 
			+ \bigl\langle \nabla_2 d_g(\gamma_1(s), \gamma_2(s)), \dot{\gamma}_2(s) \bigr\rangle \\
			&= \bigl\langle -\dot{\eta}(0), \dot{\gamma}_1(s) \bigr\rangle 
			+ \bigl\langle \dot{\eta}(1), \dot{\gamma}_2(s) \bigr\rangle \\
			&= \bigl\langle -\dot{\eta}(0), v_1 + X(\eta(0)) \bigr\rangle 
			+ \bigl\langle \dot{\eta}(1), v_2 + X(\eta(1)) \bigr\rangle \\
			&= \bigl\langle -\dot{\eta}(0), v_1 \bigr\rangle 
			+ \bigl\langle \dot{\eta}(1), v_2 \bigr\rangle \\
			&\quad + \bigl\langle -\dot{\eta}(0), X(\eta(0)) \bigr\rangle 
			+ \bigl\langle \dot{\eta}(1), X(\eta(1)) \bigr\rangle,
		\end{align*}
		where $v_1\in \nabla^+u(\eta(0))$ and $v_2\in \nabla^+u(\eta(1))$.
		
		Since \( u \) is locally semiconcave, by Remark~\ref{basic property of sc} (4) we have
		\[
		u(\eta(0)) - u(\eta(1)) \leq \bigl\langle v_2, -\dot{\eta}(1) \bigr\rangle + \frac{k}{2} d_g^2(\eta(0), \eta(1)),
		\]
		\[
		u(\eta(1)) - u(\eta(0)) \leq \bigl\langle v_1, \dot{\eta}(0) \bigr\rangle + \frac{k}{2} d_g^2(\eta(0), \eta(1)).
		\]
		Adding these inequalities yields
		\[
		\bigl\langle v_1, -\dot{\eta}(0) \bigr\rangle + \bigl\langle v_2, \dot{\eta}(1) \bigr\rangle \leq k d_g^2(\eta(0), \eta(1)) \leq k d_g(\eta(0), \eta(1)),
		\]
		where the last inequality holds since \( d_g(\eta(0), \eta(1)) \leq 1 \) by our earlier assumption. Furthermore,
		\[
		\begin{aligned}
			\bigl\langle -\dot{\eta}(0), X(\eta(0)) \bigr\rangle + \bigl\langle \dot{\eta}(1), X(\eta(1)) \bigr\rangle 
			& = \bigl\langle -\dot{\eta}(1),P_{\eta(0),\eta(1)} (X(\eta(0))) \bigr\rangle+ \bigl\langle \dot{\eta}(1), X(\eta(1)) \bigr\rangle   \\
			&= \bigl\langle \dot{\eta}(1), X(\eta(1)) - P_{\eta(0),\eta(1)} (X(\eta(0))) \bigr\rangle \\
			&\leq \tilde{k} d_g(\eta(0), \eta(1)),
		\end{aligned}
		\]
		where \( P_{\eta(0),\eta(1)} (X(\eta(0))) \in T_{\eta(1)} M \) denotes the parallel transport of \( X(\eta(0)) \) along \( \eta \).
		
		Therefore, we have
		\[
		\frac{d}{ds} d_g(\gamma_1(s), \gamma_2(s)) \leq C \, d_g(\gamma_1(s), \gamma_2(s)).
		\]
		By Gronwall's inequality with the initial condition \( d_g(\gamma_1(0), \gamma_2(0)) = 0 \), we conclude that
		\begin{align*}
			d_g(\gamma_1(s), \gamma_2(s)) = 0,\qquad\forall s \in [0,t].
		\end{align*}
		This contradicts the assumption that \( \sup_{s\in[0,t]} d_g(\gamma_1(s), \gamma_2(s))>0 \).
	\end{proof}
	
	\subsection{Riemannian case}\label{sec:Riemannian}
	
	Let $(M,g)$ be a connected complete Riemannian manifold and $u \colon W \subset M \to \mathbb{R}$ be a viscosity solution to the Eikonal equation
	\[
	g(\nabla u(x),\nabla u(x)) =\langle \nabla u,\nabla u \rangle(x)= 1, \quad x \in W,
	\]
	where $W$ is an open subset of $M$ and $\langle \cdot,\cdot\rangle$ is the Riemannian inner product. In this subsection, we assume throughout that \(u\) is a viscosity solution of the Eikonal equation mentioned above. A Lipschitz curve \(\gamma(t):[0,T]\to M\) is said to be a generalized gradient flow if it satisfies
\[
\dot\gamma(t) = \nabla^\# u(\gamma(t)) = (p^\#_u)^\sharp(\gamma(t)), \quad \text{for a.e. } t \in [0,T],
\]
where $p_u^\#(x) := \underset{p \in D^+u(x)}{\arg\min}\, \frac{1}{2}g_x^*(p,p)= \underset{p \in D^+u(x)}{\arg\min}\, g_x^*(p,p)$.

Note that, by the standard theory of viscosity solutions, \(u\) is locally semiconcave on \(W\).

	\begin{Lem}\label{lem:hessian_property}
		For any $x \in W$ where $u$ is twice differentiable, the Hessian of $u$ satisfies:
		\[
		\operatorname{Hess} u(x)(\xi, \nabla u(x)) = 0 \quad \text{for all } \xi \in T_x M.
		\]
		Moreover, for any bounded open subset $U \subset W$, there exists a constant $K = K(U) > 0$ such that at all points $x \in U$ where $u$ is twice differentiable,
		\[
		\operatorname{Hess} u(x)(\xi, \xi) \leq K\left(\langle \xi, \xi \rangle - \langle \xi, \nabla u(x) \rangle^2\right) \quad \text{for all } \xi \in T_x M.
		\]
	\end{Lem}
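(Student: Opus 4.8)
### Proof proposal

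\textbf{Overall strategy.} The plan is to differentiate the Eikonal relation $\langle \nabla u, \nabla u\rangle \equiv 1$ at a point of twice-differentiability and then control the Hessian in the direction complementary to $\nabla u$ by exploiting local (geodesic) semiconcavity together with a compactness argument on $\bar U$. The first identity is the rigid ``transport'' structure of Eikonal solutions; the second is a quantitative ``one direction is free, the other is uniformly bounded'' statement.

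\textbf{Step 1: the degeneracy identity.} At a point $x$ where $u$ is twice differentiable, consider the smooth function $F = \langle \nabla u, \nabla u\rangle$, which equals $1$ identically on the (full-measure) set where $u$ is differentiable and hence, being the square-norm of a $C^1$ vector field near such points... more carefully, I would argue pointwise: for $\xi \in T_xM$, use the second-order expansion of Proposition~\ref{thm:second_differentiability}(b), namely $u(\exp_x(t\xi)) = u(x) + t\langle\nabla u(x),\xi\rangle + \tfrac{t^2}{2}\langle A\xi,\xi\rangle + o(t^2)$ with $A = \operatorname{Hess}u(x)$. Combining this with the fact that $u$ is a viscosity (hence a.e. differentiable) solution and the covariant derivative of $\nabla u$ along any geodesic exists at $x$, differentiate $\langle \nabla u(\gamma(s)), \nabla u(\gamma(s))\rangle = 1$ along a geodesic $\gamma$ with $\gamma(0)=x$, $\dot\gamma(0)=\xi$: this gives $2\langle \nabla_\xi \nabla u(x), \nabla u(x)\rangle = 0$, i.e. $\operatorname{Hess}u(x)(\xi,\nabla u(x)) = 0$. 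The only subtlety is justifying that $s\mapsto \langle\nabla u(\gamma(s)),\nabla u(\gamma(s))\rangle$ is differentiable at $s=0$ with derivative $2\operatorname{Hess}u(x)(\xi,\nabla u(x))$; this follows from Proposition~\ref{thm:second_differentiability}(2a) applied with the gradient vector field, since $\nabla u$ agrees a.e. with a map whose covariant derivative at $x$ is $A$, and $u$ solves the equation on the full-measure set.

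\textbf{Step 2: the one-sided Hessian bound.} Fix a bounded open $U$ with $\bar U \subset W$ compact. By Theorem~\ref{thm:semiconcave_equivalence}, $u$ is locally geodesically semiconcave; by a standard covering argument on the compact set $\bar U$ (finitely many geodesically convex balls, each with its own constant) there is a single $K = K(U)>0$ such that $u$ is geodesically $K$-semiconcave on a neighborhood of each point of $U$, hence $\operatorname{Hess}u(x) \leq K\,\mathrm{Id}$ as a quadratic form at every twice-differentiable $x\in U$, i.e. $\operatorname{Hess}u(x)(\xi,\xi) \leq K\langle\xi,\xi\rangle$. Now decompose any $\xi \in T_xM$ as $\xi = \xi^\parallel + \xi^\perp$ where $\xi^\parallel = \langle\xi,\nabla u(x)\rangle\,\nabla u(x)$ (recall $\|\nabla u(x)\| = 1$) and $\xi^\perp \perp \nabla u(x)$. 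Expanding the bilinear form and using Step 1 (which kills every term containing a $\nabla u(x)$ factor): $\operatorname{Hess}u(x)(\xi,\xi) = \operatorname{Hess}u(x)(\xi^\perp,\xi^\perp) \leq K\langle\xi^\perp,\xi^\perp\rangle = K(\langle\xi,\xi\rangle - \langle\xi,\nabla u(x)\rangle^2)$, which is exactly the claimed inequality.

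\textbf{Main obstacle.} The delicate point is entirely in Step 1: the function $\nabla u$ is only defined a.e. and is merely $L^\infty$, so ``differentiating the Eikonal equation'' must be done through the second-order expansion of $u$ rather than naively on the vector field. I would handle this by working with the symmetric operator $A$ furnished by Proposition~\ref{thm:second_differentiability}, using condition (2a) — $\nabla_v\xi(x) = Av$ for \emph{any} measurable selection $\xi(y)\in \nabla^- u(y) = \{\nabla u(y)\}$ (a.e.) — to legitimately write $\tfrac{d}{ds}\big|_{s=0}\langle\nabla u(\gamma(s)),\nabla u(\gamma(s))\rangle = 2\langle A\dot\gamma(0),\nabla u(x)\rangle$ along a geodesic, and concluding it vanishes because $s\mapsto\langle\nabla u(\gamma(s)),\nabla u(\gamma(s))\rangle$ is the a.e.-constant function $1$ restricted to a full-measure set whose one-sided limiting behavior at $x$ is controlled by the expansion. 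Once this identity is secure, Step 2 is routine linear algebra plus the compactness bound.
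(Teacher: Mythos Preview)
Your proof is correct and follows exactly the paper's approach: differentiate the Eikonal identity via Proposition~\ref{thm:second_differentiability} to obtain $\operatorname{Hess}u(\xi,\nabla u)=0$, then use local semiconcavity on the compact set $\bar U$ together with the orthogonal decomposition $\xi = \langle\xi,\nabla u\rangle\nabla u + \xi^\perp$ for the second estimate. The technical concern you flag about $\nabla u$ being defined only a.e.\ is legitimate, but the paper resolves it by the same appeal to Proposition~\ref{thm:second_differentiability} (specifically condition (2a)) that you propose, without dwelling on it further.
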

	
	\begin{proof}
		For any $\xi \in T_x M$, Proposition \ref{thm:second_differentiability} yields
		\[
		\nabla_{\xi} \langle \nabla u, \nabla u \rangle = 2 \langle \nabla_{\xi} \nabla u, \nabla u \rangle = 2 \operatorname{Hess} u(\xi, \nabla u).
		\]
		Since $u$ satisfies the Eikonal equation $\langle \nabla u, \nabla u \rangle \equiv 1$, its derivative vanishes, giving
		\[
		\operatorname{Hess} u(\xi, \nabla u) = 0.
		\]
		
		Since $u$ is locally semiconcave, there exists a constant $K > 0$ such that for all $x \in U$ where $u$ is twice differentiable and all $\xi \in T_x M$,
		\[
		\operatorname{Hess} u(\xi, \xi) \leq K \langle \xi, \xi \rangle.
		\]
		Decompose $\xi$ into its components parallel and perpendicular to $\nabla u$
		\[
		\xi^{\perp} := \xi - \langle \xi, \nabla u \rangle \nabla u.
		\]
		Using the orthogonality property $\operatorname{Hess} u(\xi, \nabla u) = 0$, we compute
		\[
		\begin{aligned}
			\operatorname{Hess} u(\xi, \xi) &= \operatorname{Hess} u(\xi^{\perp} + \langle \xi, \nabla u \rangle \nabla u, \xi^{\perp} + \langle \xi, \nabla u \rangle \nabla u) \\
			&= \operatorname{Hess} u(\xi^{\perp}, \xi^{\perp}) + 2\langle \xi, \nabla u \rangle \operatorname{Hess} u(\xi^{\perp}, \nabla u) + \langle \xi, \nabla u \rangle^2 \operatorname{Hess} u(\nabla u, \nabla u) \\
			&= \operatorname{Hess} u(\xi^{\perp}, \xi^{\perp}) \\
			&\leq K \langle \xi^{\perp}, \xi^{\perp} \rangle \\
			&= K \left( \langle \xi, \xi \rangle - \langle \xi, \nabla u \rangle^2 \right).
		\end{aligned}
		\]
	\end{proof}
	
	\begin{The}[Global Propogation of Singularities]\label{thm:singularity_persistence}
		Let $U \subset W$ be a bounded open subset and $x_0 \in U$. If $x_0 \in \SING(u)$, then the generalized gradient flow $\gamma:[0,T] \to U$ with $\gamma(0) = x_0$ satisfies
		\[
		\gamma(t) \in \SING(u) \quad \text{for all } t \in [0,T],
		\]
		where $\SING(u)$ denotes the singularity sets of $u$ in $W$.
	\end{The}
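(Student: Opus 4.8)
The plan is to show that singularities cannot "disappear instantaneously" along the generalized gradient flow, by a contradiction argument combined with the regularization technique of Cannarsa--Yu. Suppose, for contradiction, that $\gamma(t_0) \notin \SING(u)$ for some $t_0 \in (0,T]$. Since $\SING(u)$ is relatively closed in $W$ and $x_0 = \gamma(0) \in \SING(u)$, we may choose $t_0$ to be the infimum of times at which $\gamma$ leaves $\SING(u)$; then $u$ is differentiable at $\gamma(t_0)$, $\gamma(t)\in\SING(u)$ for $t<t_0$ (so $t_0>0$), and on the interval $[0,t_0)$ the flow selects $p_u^\#(\gamma(t))\in D^+u(\gamma(t))$ with $\|p_u^\#(\gamma(t))\|_{g^*}=1$ (the Eikonal equation forces \emph{every} element of $D^+u$ to have norm $\geq 1$ when $u$ is a viscosity solution, and the reachable differentials have norm exactly $1$, so $p_u^\#$ has norm $1$ at non-differentiability points).

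First I would set up the key monotone quantity. Along the flow, consider $t \mapsto u(\gamma(t))$; by semiconcavity and the definition of $\dot\gamma = \nabla^\# u(\gamma)$, one has $\frac{d}{dt}u(\gamma(t)) = \langle \nabla^\# u(\gamma(t)), \dot\gamma(t)\rangle = \|p_u^\#(\gamma(t))\|_{g^*}^2 = 1$ for a.e.\ $t$ where $\gamma(t)\in\SING(u)$, while at a point of differentiability the same computation gives derivative $1$ as well. So $u(\gamma(t)) = u(x_0) + t$. The real content is a \emph{quantitative} lower bound showing $\gamma$ cannot reach a differentiability point: the standard approach is to mollify $u$ on a bounded convex chart around $\gamma([0,t_0])$ via Lemma~\ref{modify} to get $u^m$ with $Du^m(x_0)\to p_0$ for a fixed $p_0\in D^+u(x_0)$, run the approximate flows $\dot\gamma_m = \nabla^\# u^m(\gamma_m)$, and use Lemma~\ref{lem:hessian_property} — transported to the mollified level, $\operatorname{Hess}u^m(\xi,\xi)\le K(|\xi|^2 - \langle\xi,Du^m\rangle^2) + o(1)$ — to control the evolution of $g^*(Du^m(\gamma_m(t)),Du^m(\gamma_m(t)))$. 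Differentiating this energy along $\gamma_m$ and using the Hessian bound shows the energy stays near $1$ and, crucially, that the velocity $\nabla^\# u^m$ stays uniformly bounded below, away from $0$, along the whole segment. Passing to the limit $m\to\infty$ using Lemma~\ref{uniform semiconcave} (so limit points of $Du^m(\gamma_m(t))$ lie in $D^+u(\gamma(t))$) and Proposition~\ref{pro:uniqueness} to identify the limit curve with $\gamma$, one gets that $\|p\|_{g^*}=1$ for \emph{some} $p\in D^+u(\gamma(t_0))$; but if $u$ were differentiable at $\gamma(t_0)$ this is not yet a contradiction — so the argument must instead produce that $D^+u(\gamma(t_0))$ contains a \emph{nontrivial segment}, i.e.\ two distinct reachable differentials, forcing non-differentiability.

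The cleaner route to that last point is to exploit the geometry of Lemma~\ref{lem:hessian_property} directly: the inequality $\operatorname{Hess}u(\xi,\xi) \le K(\langle\xi,\xi\rangle - \langle\xi,\nabla u\rangle^2)$ says $u$ is, after subtracting the "flat" direction $\nabla u$, strictly semiconcave transversally, which is exactly the mechanism that propagates kinks. Concretely, I would show that if $\gamma(t_0)$ were a differentiability point then, by continuity of $\nabla u$ at such points and the above second-order bound, one could extend the integral curve of $\nabla u$ backwards from $\gamma(t_0)$ and match it with the unit-speed structure of $\gamma$ on $[0,t_0)$, forcing $\gamma$ to be a $C^1$ calibrated geodesic through $x_0$ — contradicting $x_0\in\SING(u)$ via $\SING(u)\subset\Cut(u)$ and the characterization of cut points (no calibrated $C^1$ curve emanates forward from a cut point for positive time in a way consistent with differentiability).

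The main obstacle I anticipate is the limiting step: ensuring the approximate flows $\gamma_m$ converge (uniformly on $[0,t_0]$) to a solution of the differential inclusion \eqref{differential inclusion} and that the selected momenta $Du^m(\gamma_m(t))$ converge to the correct selection $p_u^\#(\gamma(t))$ rather than to some other element of $D^+u(\gamma(t))$ — this is where Proposition~\ref{pro:uniqueness} (uniqueness for the inclusion) and the lower semicontinuity in Lemma~\ref{uniform semiconcave} must be combined carefully, and where the Eikonal normalization $\|p_u^\#\|_{g^*}=1$ is essential to keep the velocities from degenerating. A secondary technical point is transferring Lemma~\ref{lem:hessian_property}, proved at twice-differentiability points of $u$, to a usable pointwise-a.e.\ bound on the smooth $u^m$; this requires the Greene--Wu-type approximation compatibility (Theorem~\ref{thm:semiconcave_equivalence}) and an $o(1)$ error from the chart/mollification that is controlled uniformly on the bounded set $U$.
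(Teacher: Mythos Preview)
Your proposal has a genuine gap rooted in a sign error about the superdifferential. You write that ``the Eikonal equation forces \emph{every} element of $D^+u$ to have norm $\geq 1$'' and that ``$p_u^\#$ has norm $1$ at non-differentiability points''. This is backwards. For a semiconcave viscosity solution of $\|Du\|_{g^*}^2=1$, the subsolution condition gives $\|p\|_{g^*}\leq 1$ for all $p\in D^+u(x)$; the reachable differentials $D^*u(x)$ lie on the unit sphere, and $D^+u(x)=\operatorname{co}D^*u(x)$. Hence at a singular point the minimal selection $p_u^\#(x)$ is a strict convex combination of unit covectors and satisfies $\|p_u^\#(x)\|_{g^*}<1$. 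The characterization is: $x\in\SING(u)$ if and only if $\min_{p\in D^+u(x)}\|p\|_{g^*}^2<1$.

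This inversion derails the rest of your outline. The monotone quantity is not ``energy stays near $1$'' but rather the \emph{energy defect} $\psi(t)=\|p_u^\#(\gamma(t))\|_{g^*}^2-1$, which is strictly negative at $t=0$ and must be shown to stay negative. The paper does exactly this: it chooses $p_0\in D^+u(x_0)$ with $\|p_0\|_{g^*}<1$, builds the mollifications $u^m$ from Lemma~\ref{modify} so that $Du^m(x_0)\to p_0$, transfers the Hessian bound of Lemma~\ref{lem:hessian_property} to $u^m$ with an $o(1/m)$ error, and then differentiates $\psi_m(t)=\|Du^m(\gamma_m(t))\|_{g^*}^2-1$ along the smooth flow $\dot\gamma_m=\nabla u^m(\gamma_m)$ to obtain $\psi_m'\leq -C\psi_m+o(1/m)$. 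Gronwall then gives $\liminf_m\psi_m(t)<0$ for every $t$, and the passage to the limit (Arzel\`a--Ascoli, Lemma~\ref{uniform semiconcave}, and uniqueness from Proposition~\ref{pro:uniqueness}) yields $\bar H(\gamma(t),p_u^\#(\gamma(t)))<0$, i.e.\ $\gamma(t)\in\SING(u)$. Your attempt to instead produce two distinct reachable differentials at $\gamma(t_0)$, or to argue via backward calibrated geodesics, is unnecessary once the correct inequality is in place, and as written those sketches do not constitute an argument.
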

	
	\begin{proof}
		We work in local coordinates $(x^1,\ldots,x^n)$ on $U$ to establish the key estimates with
		\begin{itemize}
			\item Metric matrix $( g_{ij})_{n\times n} = ( g(\partial_i,\partial_j))_{n\times n}$ and it's inverse matrix $( g^{ij})_{n \times n} = ( g(\partial_i,\partial_j))_{n\times n}^{-1}$
			\item Christoffel symbols $\Gamma_{ij}^k$
		\end{itemize} From the Hessian bound
		\[
		\operatorname{Hess} u(\xi, \xi) \leq K(\langle \xi, \xi \rangle - \langle \xi, \nabla u \rangle^2),
		\]
		valid at all twice differentiable points $x \in U$, we obtain in coordinates
		\begin{equation}\label{original hessian}
			\xi^i \left( u_{ij}(x) - \Gamma_{ij}^k(x) u_k(x) \right) \xi^j \leq K \left( \xi^i g_{ij}(x) \xi^j - (\xi^i g_{ij}(x) u^j(x))^2 \right),
		\end{equation}
		where $\xi = \xi^i \partial_i$, $u_{ij} = \frac{\partial^2 u}{\partial x^i \partial x^j}$,  $u_k=\frac{\partial u}{\partial x^k}$ and $u^i=g^{ij}u_j$. (Here and hereafter, we use the Einstein summation convention.)
		
		In view of Lemma \ref{modify}, we may choose a sequence of mollifiers \(\{\eta^m\}\) consisting of nonnegative smooth bump functions such that \(\text{supp}\, \eta^m \subset B(0,1/m)\) and \(\int \eta^m  dx = 1\), and then define the corresponding regularized functions.
		\[
		u^m(x) := (\eta^m * u)(x),
		\]
		which satisfy:
		\begin{itemize}
			\item $|Du^m| \leq \text{Lip}\,(u)$,
			\item $u^m$ are uniformly semiconcave,
			\item $u^m \to u$ uniformly on compact sets,
			\item $Du^m(x_0) \to p_0\in D^+u(x_0)$ with $\|p_0 \|_{g^*}<1$.
		\end{itemize}
		The mollified Hessian satisfies
		\[
		\begin{aligned}
			\xi^i \left( u_{ij}^m - \Gamma_{ij}^k u_k^m \right) \xi^j &= \left[ \eta_m * (u_{ij} - \Gamma_{ij}^k u_k) \right] \xi^i \xi^j \\
			&\quad + \int \eta_m(x-y) \left( \Gamma_{ij}^k(y) - \Gamma_{ij}^k(x) \right) u_k(y) \xi^i \xi^j dy \\
			&=: I_m + II_m.
		\end{aligned}
		\]
		
		The error term $II_m$ is estimated using the Cauchy-Schwarz inequality
		\[\begin{aligned}
			II_m &= \int (\eta^m)^{\frac{1}{2}} \left( \Gamma_{ij}^k(y) - \Gamma_{ij}^k(x) \right) (\eta^m)^{\frac{1}{2}} u_k \xi^i \xi^j dy\\
			&\leq \sum_{i,j,k} \left( \int \eta^m(x-y) \left( \Gamma_{ij}^k(y) - \Gamma_{ij}^k(x) \right)^2 dy \right)^{\frac{1}{2}}\times \left( \int \eta^m(x-y) \left( u_k \xi^i \xi^j \right)^2 dy \right)^{\frac{1}{2}} \\
			&= o\left( \frac{1}{m} \right).
		\end{aligned}
		\]
		
		For the main term $I_m$, we apply the inequality \eqref{original hessian} to get
		\[
		I_m \leq K \left( \xi^i g_{ij}(x) \xi^j +o\left( \frac{1}{m} \right) - \int \eta_m(x-y) (\xi^i g_{ij}(y) u^j(y))^2 dy \right).
		\]
		Using Jensen's inequality for the second term
		\[
		\begin{aligned}
			\int \eta^m(x-y) \left( \xi^i g_{ij}(y) u^j(y) \right)^2 dy &= \int (\eta^m)^{\frac{1}{2}\times 2}(x-y) dy \int (\eta^m)^{\frac{1}{2}\times 2}(x-y) \left( \xi^i g_{ij}(y) u^j(y) \right)^2 dy \\
			&\geq \left( \int \eta^m(x-y) \xi^i g_{ij}(y) u^j(y) dy \right)^2 \\
			&= \left( \int \eta^m(x-y) \xi^i u_i(y) dy \right)^2\\
			&= \left\{ \xi^i (\eta^m * u_i)(x) \right\}^2 = \langle \xi, \nabla u^m \rangle^2,
		\end{aligned}
		\]
		we obtain the regularized version
		\[
		\operatorname{Hess} u^m(\xi, \xi) \leq K \left( \langle \xi,\xi \rangle - \langle \xi, \nabla u^m \rangle^2 \right) + o\left( \frac{1}{m} \right).
		\]
		
		Now consider the gradient flow $\gamma_m$ of $u^m$
		\[
		\begin{cases}
			\dot\gamma_m(t) = \nabla u^m(\gamma_m(t)), \\
			\gamma_m(0) = x_0.
		\end{cases}
		\]
		Define $\psi_m(t) := \langle \nabla u^m(\gamma_m(t)), \nabla u^m(\gamma_m(t)) \rangle - 1=\|Du_m(\gamma_m(t)) \|_{g^*}^2-1$. Differentiating yields
		\[
		\begin{aligned}
			\psi_m'(t) &= \operatorname{Hess} u^m(\nabla u^m(\gamma_m(t)), \nabla u^m(\gamma_m(t))) \\
			&\leq K \left\{ \langle \nabla u^m, \nabla u^m \rangle - \langle \nabla u^m, \nabla u^m \rangle^2 \right\} + o\left( \frac{1}{m} \right) \\
			&= K \left\{ \langle \nabla u^m, \nabla u^m \rangle (1 - \langle \nabla u^m, \nabla u^m \rangle) \right\} + o\left( \frac{1}{m} \right) \\
			&\leq -C \psi_m(t) + o\left( \frac{1}{m} \right)
		\end{aligned}
		\]
		for some constant $C > 0$. So Gronwall's inequality gives
		\[
		\psi_m(T) \leq e^{-CT}\psi_m(0) + \int_0^T e^{C(s-T)} o\left( \frac{1}{m} \right) ds.
		\]
		Since $\psi_m(0)  =\|Du_m(x_0) \|_{g^*}^2-1 \to \|p_0 \|_{g^*}-1=a<0 $, taking $m \to \infty$, the $o\left( \frac{1}{m} \right)$ terms vanish and we conclude $\liminf\limits_{m\to \infty} \psi_m(t) < 0$ for all $t$.
		
		By the Arzel\`a--Ascoli theorem, we may assume, up to a subsequence, that the curves \(\gamma_m(t)\) converge uniformly to a Lipschitz curve. From Lemma \ref{uniform semiconcave}, we conclude that this limiting curve is a solution to the differential inclusion \eqref{differential inclusion}. Combining this result with Proposition \ref{differential inclusion}, we deduce that the entire sequence \(\gamma_m\) converges uniformly to \(\gamma\). Define \(\bar H(x,p) = \|p\|_{g^*}^2 - 1\). Then, \(\gamma(t) \in \SING(u)\) if and only if \(\bar H\bigl(\gamma(t), p^\#_u(\gamma(t))\bigr) <0\). By Lemma \ref{uniform semiconcave}, we have that 
		$$  0>\liminf\limits_{m\to \infty} \psi_m(t)=\liminf\limits_{m\to \infty} \bar H(\gamma_m(t),Du^m(\gamma_m(t)))\geq \bar H(\gamma(t),p^\#_u(\gamma(t))), $$
		proving $\gamma(t) \in \SING(u)$.
	\end{proof}
	
	\begin{Rem}\label{rem:Finsler}
		The preceding proof fails when \( M \) is equipped with a Finsler metric \( F \) and \( u \colon U \subset M \to \mathbb{R} \) is a viscosity solution of the Finslerian eikonal equation
		\[
		F(\nabla u, \nabla u) = 1, \qquad x \in W,
		\]
		where \( \nabla u \) denotes the Finslerian gradient. The obstruction arises because the Finsler metric matrix \( \bigl( g_{ij}(x, \nabla u) \bigr)_{1 \leq i,j \leq n} \) depends nonlinearly and hence sensitively on the gradient \(\nabla u\). Consequently, for a mollified sequence \( u^{m} = \eta^{m} * u \), the matrices \( \bigl( g_{ij}(x, \nabla u^m) \bigr)_{1 \leq i,j \leq n} \) need not converge uniformly on compact subsets to \( \bigl( g_{ij}(x, \nabla u) \bigr)_{1 \leq i,j \leq n} \), which invalidates the previous argument.
	\end{Rem}
	
	\subsection{Magnetic case}\label{sec:magnetic}
	
	Let $(M,g)$ be a connected closed Riemannian manifold and $\omega$ a smooth 1-form (not necessarily closed). Consider the Lagrangian $L\colon TM \to \mathbb{R}$ defined by
	\[
	L(x,v) = \frac{1}{2}g_x(v,v) - \omega_x(v) - V(x),
	\]
	where $V\in C^2(M)$ is a  potential function. The corresponding Hamiltonian $H \colon T^*M \to \mathbb{R}$ is
	\[
	H(x,p) = \frac{1}{2}g_x^*(p + \omega_x, p + \omega_x) + V(x),
	\]
	where $g^*$ denotes the dual Riemannian metric on $T^*M$. The Hamilton-Jacobi equation for this system is:
	\begin{equation}\label{eq:HJ}
		H(x, Du) = \frac{1}{2}\|Du + \omega\|^2_{g^*} + V(x)=\frac{1}{2}\langle \nabla u+X,\nabla u+X \rangle+V(x) = c,
	\end{equation}
	where $u \colon M \to \mathbb{R}$ is a weak KAM solution, $c = c[L]$ is the critical value, $\|\cdot\|_{g^*}$ is the dual norm induced by $g$, $\langle \cdot, \cdot \rangle$ denotes the Riemannian inner product, $\nabla u$ represents the Riemannian gradient and  $X = \omega^\sharp$ is the vector field dual to the 1-form $\omega$ via the musical isomorphism.
	
	\begin{Lem}\label{lem:critical_value}
		For the Lagrangian $L(x,v) = \frac{1}{2}g_x(v,v) - \omega_x(v) - V(x)$ with $\max V = 0$, the critical value satisfies $c[L] \geq 0$.
	\end{Lem}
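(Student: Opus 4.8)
The plan is to read off the bound from a variational characterization of Ma\~n\'e's critical value, exploiting that $M$ is closed. Recall that on a closed manifold $c[L]$ admits the equivalent descriptions
\[
c[L] \;=\; -\min_{\mu}\int_{TM} L\,d\mu \;=\; \sup\Bigl\{\,k\in\R : \exists\, T>0 \text{ and a closed absolutely continuous curve } \gamma\colon[0,T]\to M \text{ with } \textstyle\int_0^T\!\bigl(L(\gamma,\dot\gamma)+k\bigr)\,ds<0\,\Bigr\},
\]
where the minimum runs over Borel probability measures on $TM$ invariant under the Euler--Lagrange flow (equivalently, over holonomic measures). I would base the argument on whichever of these is most convenient to cite.

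First I would use compactness of $M$ together with $\max_{x\in M}V(x)=0$ to fix a point $x_0\in M$ with $V(x_0)=0$. Evaluating the Lagrangian at the zero vector over $x_0$ gives
\[
L(x_0,0)=\tfrac12 g_{x_0}(0,0)-\omega_{x_0}(0)-V(x_0)=-V(x_0)=0 .
\]
Then I would test the second characterization with the constant curve $\gamma\equiv x_0$ on $[0,T]$, which is a closed absolutely continuous curve: for every $k<0$ one has
\[
\int_0^T\bigl(L(\gamma,\dot\gamma)+k\bigr)\,ds = Tk<0,
\]
so every $k<0$ belongs to the set in the supremum, whence $c[L]\ge 0$. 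Equivalently, via the first characterization, the Dirac mass $\mu=\delta_{(x_0,0)}$ on $TM$ is a holonomic probability measure (indeed, at a critical point of $V$ such as $x_0$ it is even Euler--Lagrange invariant, since $(x_0,-\omega_{x_0})$ is then a rest point of the Hamiltonian flow, as one checks from Hamilton's equations at $p=-\omega_{x_0}$ using $dV(x_0)=0$), and $\int_{TM}L\,d\mu=L(x_0,0)=0$, so $-c[L]\le 0$.

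I do not anticipate a genuine difficulty here: the only point deserving care is to make sure the chosen characterization admits constant curves — equivalently point masses at rest points — as admissible competitors, which is standard for Ma\~n\'e's definitions, and, if I take the invariant-measure route, the one-line verification that the zero section over a critical point of $V$ is invariant under the magnetic Euler--Lagrange flow. This lemma is only used to locate the set $W=\{x : c[L]>V(x)\}$ in the main results, so the qualitative bound $c[L]\ge 0$ (with equality iff $\SING(u)=\emptyset$ under the stated topological hypotheses) is exactly what is needed downstream.
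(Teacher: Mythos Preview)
Your argument is correct and is essentially the same as the paper's: both pick $x_0$ with $V(x_0)=0$ and test with the constant curve $\gamma\equiv x_0$, observing that its action is zero. The only cosmetic difference is that the paper feeds this curve into the weak KAM domination inequality $u(y)-u(x)\le\int_0^T(L+c[L])$, whereas you feed it into the equivalent closed-curve (or holonomic-measure) characterization of $c[L]$; nothing further is needed.
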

	
	\begin{proof}
		Let $x_0 \in M$ be a point where $V(x_0) = 0$ (which exists by our normalization). Consider the constant path $\gamma \colon [0,1] \to M$ defined by $\gamma(t) \equiv x_0$. For this path, we compute
		\[
		\int_0^1 L(\gamma(t), \dot\gamma(t))\,dt = \int_0^1 \left(\frac{1}{2}\|\dot\gamma(t)\|_{g_{x_0}}^2 - \omega_{x_0}(\dot\gamma(t)) - V(\gamma(t))\right)dt = 0,
		\]
		where the vanishing occurs since $\dot\gamma(t) \equiv 0$ and $V(x_0) = 0$.
		
		According to the weak KAM theorem, there exists a viscosity solution $u \in C(M,\mathbb{R})$ of the Hamilton-Jacobi equation that is dominated by $L + c[L]$, meaning that for every absolutely continuous curve $\eta \colon [0,T] \to M$ with endpoints $\eta(0) = x$ and $\eta(T) = y$, the following inequality holds
		\[
		u(y) - u(x) \leq \int_0^T \left[L(\eta(t), \dot\eta(t)) + c[L]\right] dt.
		\]
		
		Applying this inequality to the constant path $\gamma \equiv x_0$ (with $x = y = x_0$ and $T = 1$), we obtain that
		\[
		0 = u(x_0) - u(x_0) \leq \int_0^1 L(\gamma(t), \dot\gamma(t))\,dt + c[L] = c[L],
		\]
		where we used the fact that $\int_0^1 L(\gamma(t), \dot\gamma(t))\,dt = 0$ as computed previously. This establishes the lower bound $c[L] \geq 0$.
	\end{proof}
	
	\begin{Lem}\label{lem:non_exact_critical}
		Consider the Lagrangian $L(x,v) = \frac{1}{2}g_x(v,v) - \omega_x(v)$, where $\omega$ is a non-exact $1$-form. Then the critical value satisfies the strict inequality $c[L] > 0$.
	\end{Lem}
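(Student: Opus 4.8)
The plan is to argue by contradiction, using the domination property of weak KAM solutions already exploited in the proof of Lemma~\ref{lem:critical_value}. Applying that lemma with $V\equiv 0$ (so that $\max V=0$) gives $c[L]\ge 0$, hence it suffices to rule out $c[L]=0$. Assume $c[L]=0$ and let $u\colon M\to\mathbb R$ be a weak KAM solution; then $u$ is dominated by $L+c[L]=L$, meaning $u(\eta(T))-u(\eta(0))\le\int_0^T L(\eta,\dot\eta)\,dt$ for every absolutely continuous curve $\eta\colon[0,T]\to M$.

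The decisive step is to test this inequality on a family of \emph{slowed-down loops}. Fix a smooth loop $\gamma\colon[0,T]\to M$ with $\gamma(0)=\gamma(T)$, and for $\varepsilon\in(0,1]$ set $\gamma_\varepsilon(s):=\gamma(\varepsilon s)$ for $s\in[0,T/\varepsilon]$, which is again a loop with the same base point. A change of variables $t=\varepsilon s$, together with $V\equiv 0$ and the fact that $\int_\gamma\omega$ is invariant under reparametrization while the kinetic term is quadratic in the velocity, gives
\[
\int_0^{T/\varepsilon}L\bigl(\gamma_\varepsilon(s),\dot\gamma_\varepsilon(s)\bigr)\,ds
=\frac{\varepsilon}{2}\int_0^T \|\dot\gamma(t)\|_{g}^2\,dt-\int_\gamma\omega .
\]
Applying domination to $\gamma_\varepsilon$ (and using $c[L]=0$) yields $0\le \tfrac{\varepsilon}{2}\int_0^T\|\dot\gamma\|_g^2\,dt-\int_\gamma\omega$ for every $\varepsilon\in(0,1]$; letting $\varepsilon\to 0^+$ we obtain $\int_\gamma\omega\le 0$ for every smooth loop $\gamma$.

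To conclude, apply this estimate to the reversed loop $t\mapsto\gamma(T-t)$, whose $\omega$-integral equals $-\int_\gamma\omega$; this gives $\int_\gamma\omega\ge 0$ as well, so $\int_\gamma\omega=0$ for every smooth loop on $M$. Fixing a base point $x_0\in M$ and setting $f(x):=\int_{\sigma_x}\omega$ along an arbitrary piecewise smooth path $\sigma_x$ from $x_0$ to $x$ then defines a function, independent of the chosen path, with $df=\omega$; that is, $\omega$ is exact, contradicting the hypothesis. Hence $c[L]>0$.

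I do not anticipate a genuine obstacle: the only care needed is in the change of variables defining the action of $\gamma_\varepsilon$ and in the classical passage from ``all loop integrals of $\omega$ vanish'' to ``$\omega$ is exact'' (which requires only continuity of $\omega$ and, in particular, does not need $\omega$ to be closed --- consistent with $\omega$ being merely $C^2$ and possibly non-closed). If a more analytic route is preferred, one may instead observe that for $c[L]=0$ any weak KAM solution satisfies $\tfrac12\|Du+\omega\|_{g^*}^2=0$, hence $Du=-\omega$ almost everywhere; since $-\omega$ is continuous, mollification shows $u\in C^1$ with $du=-\omega$, which again forces $\omega$ to be exact.
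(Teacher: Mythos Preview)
Your main argument is correct and is essentially the paper's proof in contrapositive form: both slow down a loop via $\gamma_\varepsilon(s)=\gamma(\varepsilon s)$, compute the action as $\tfrac{\varepsilon}{2}\int\|\dot\gamma\|^2-\int_\gamma\omega$, and feed this into the domination inequality for a weak KAM solution. The paper picks one loop with $\int_\gamma\omega=C_1>0$ and a specific scale $s_1=C_1/(2C_2)$ to extract the explicit bound $c[L]\ge C_1^2/(4C_2)$, whereas you let $\varepsilon\to 0$ over all loops to force $\int_\gamma\omega=0$ everywhere and hence exactness; these are the same mechanism, with the paper's version giving a quantitative estimate you forgo. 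Your closing analytic alternative---$c[L]=0$ forces $\|Du+\omega\|_{g^*}=0$ a.e., hence $du=-\omega$ and $\omega$ is exact---is a genuinely shorter route not taken in the paper.
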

	
	\begin{proof}
		Since \(\omega\) is not exact, de Rham's theorem ensures the existence of a closed curve
		\(\gamma\colon [0,1]\to M\) such that
		\[
		C_1:=\int_{\gamma}\omega>0.
		\]
		Define the (positive) energy of this loop by
		\[
		C_2:=\frac12\int_0^1 g_{\gamma(t)}\!\bigl(\dot\gamma(t),\dot\gamma(t)\bigr)\,dt>0.
		\]
		
		
		Reparameterizing the original curve by
		\[
		\gamma_s(t):=\gamma(st),\qquad t\in[0,1/s],
		\]
		its action becomes
		\[
		\mathcal A(\gamma_s)=\int_{\gamma_s}L=sC_2-C_1.
		\]
		Whenever the scaling parameter satisfies \(s\in(0,C_1/C_2)\), we immediately have \(\mathcal A(\gamma_s)<0\).
		
		By the weak KAM theorem, there exists a weak KAM solution \(u\in C(M,\mathbb R)\) satisfying the domination inequality
		\[
		u(y)-u(x)\le \int_0^T L\!\bigl(\eta(t),\dot\eta(t)\bigr)\,dt+c[L]\,T
		\]
		for every absolutely continuous path \(\eta\colon[0,T]\to M\) with \(\eta(0)=x\) and \(\eta(T)=y\).
		
		%
		
		Choosing \(s_1=C_1/(2C_2)\) gives
		\[
		\mathcal A(\gamma_{s_1})=s_1C_2-C_1=-\frac{C_1}{2}.
		\]
		Since \(\gamma_{s_1}\) is closed, \(\gamma_{s_1}(0)=\gamma_{s_1}(1/s_1)\).  The weak KAM inequality then yields
		\[
		0=u\bigl(\gamma_{s_1}(1/s_1)\bigr)-u\bigl(\gamma_{s_1}(0)\bigr)
		\le \mathcal A(\gamma_{s_1})+\frac{c[L]}{s_1}.
		\]
		Rearranging, we obtain
		\[
		c[L]\ge -s_1\mathcal A(\gamma_{s_1})
		=\frac{C_1}{2C_2}\cdot\frac{C_1}{2}>0. \qedhere
		\]
		%
	\end{proof}
	
	The following proposition demonstrates the prevalence of singularities in $u$.
	
	\begin{Lem}\label{trivial bundle}
		Let $(M,g)$ be a closed orientable surface. If there exists a continuous nonvanishing vector field $X$ on $M$, then the tangent bundle $TM$ and cotangent bundle $T^*M$ are trivial.
	\end{Lem}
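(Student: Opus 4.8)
The plan is to build an explicit global frame for $TM$ out of the single field $X$, using the fact that an \emph{oriented} Riemannian surface carries a canonical ``rotation by $\pi/2$'' operator. Concretely, the metric $g$ together with the orientation determines a smooth bundle automorphism $J\colon TM\to TM$ with $J^{2}=-\mathrm{id}$, characterized pointwise by: $g(Jv,Jw)=g(v,w)$, $g(v,Jv)=0$, and $\{v,Jv\}$ positively oriented whenever $v\neq 0$. (On a surface this is the unique almost complex structure compatible with $g$ and the orientation; it is smooth because $g$ is.) First I would set $Y:=JX$. Since $X$ is nonvanishing, so is $Y$, and $g(X,Y)=g(X,JX)=0$ with $|Y|_{g}=|X|_{g}\neq 0$, so at every $p\in M$ the vectors $X(p),Y(p)$ are orthogonal and nonzero, hence a basis of the $2$-dimensional space $T_{p}M$.

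Then the bundle morphism
\[
\Phi\colon M\times\R^{2}\longrightarrow TM,\qquad \Phi\bigl(p,(a,b)\bigr)=aX(p)+bY(p),
\]
is fiberwise a linear isomorphism, hence a vector bundle isomorphism, which exhibits $TM$ as the trivial rank-$2$ bundle $M\times\R^{2}$. For the cotangent bundle, the musical isomorphism $\flat\colon TM\to T^{*}M$, $v\mapsto g_{x}(v,\cdot)$, is a vector bundle isomorphism; composing with $\Phi$ gives $T^{*}M\cong M\times\R^{2}$ (equivalently, the dual of a trivial bundle is trivial). Thus both $TM$ and $T^{*}M$ are trivial.

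The only step requiring genuine care — and what I would flag as the main obstacle, although it is classical — is the global existence of the operator $J$: in a single chart only $\pm J$ is canonically defined, and one needs the orientation of $M$ to select the sense of rotation consistently across overlapping charts so that these local choices glue to a globally defined $J$. Orientability is used exactly here; on a non-orientable closed surface (e.g.\ the Klein bottle) the argument genuinely fails, in line with the fact that there $TM$ is not trivial even though a nonvanishing vector field exists.
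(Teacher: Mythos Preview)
Your proof is correct and essentially identical to the paper's: the paper phrases the rotation-by-$\pi/2$ via the Hodge star on $1$-forms, taking $\{X^\flat,\star X^\flat\}$ as a global frame for $T^*M$, and $(\star X^\flat)^\sharp$ is precisely your $JX$. Both arguments rest on the same orientation-dependent global operator and produce the same frame, just in dual language.
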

	
	\begin{proof}
		Since a vector bundle $E$ is trivial precisely when it admits a global frame, it suffices to construct global frames for $TM$ and $T^*M$. Given the continuous nonvanishing vector field $X$, define its dual 1-form $X^\flat$ by $X^\flat(v) = g(X, v)$ for all $v \in TM$. Using the Hodge star operator $\star$, we obtain an orthogonal 1-form $\star X^\flat$ satisfying $g^*(X^\flat, \star X^\flat) = 0$, where $g^*$ is the induced metric on $T^*M$. (Geometrically, $\star$ acts on tangent vectors by rotation by $\pi/2$ in each oriented tangent plane.) Thus $\{X^\flat, \star X^\flat\}$ forms a global frame for $T^*M$, and consequently $\{X, (\star X^\flat)^\sharp\}$ is a global frame for $TM$.
	\end{proof}
	
	\begin{The}\label{thm:sing_exists}
		Let $(M,g)$ be a connected closed orientable surface with Euler characteristic $\chi(M) \neq 0$. Consider the Lagrangian 
		\[
		L(x,v) = \tfrac{1}{2}g_x(v,v) - \omega_x(v) - V(x)
		\]
		where $\max_{x\in M} V(x) = 0$, and let $u$ be a weak KAM solution of \eqref{eq:HJ}. If $\SING(u) = \emptyset$, then $c[L] = \max V = 0$. In particular, $\SING(u) \neq \emptyset$ when $V \equiv 0$ and $\omega$ is non-exact.
	\end{The}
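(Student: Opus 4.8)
The plan is to argue by contradiction, exploiting the fact that a singularity-free weak KAM solution is everywhere differentiable and hence, being locally semiconcave, of class $C^1$ on all of $M$. First I would record that if $\SING(u)=\emptyset$ then $u\in C^1(M)$: a locally semiconcave function that is differentiable at every point has continuous gradient. Since $u$ is a viscosity solution of \eqref{eq:HJ} and is classically differentiable at each point, the equation
\[
\tfrac12\,\langle \nabla u+X,\nabla u+X\rangle(x)+V(x)=c[L]
\]
holds \emph{pointwise everywhere} on $M$, where $X=\omega^\sharp$ is a continuous vector field and $\nabla u$ is continuous.

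Next, evaluating this identity at a point $x_0$ with $V(x_0)=\max V=0$ gives $c[L]=\tfrac12\|\nabla u(x_0)+X(x_0)\|_g^2\ge 0$ (this is also Lemma \ref{lem:critical_value}). Suppose, for contradiction, that $c[L]>0$. Since $V(x)\le \max V=0$ for every $x\in M$, the equation yields
\[
\|\nabla u(x)+X(x)\|_g^2=2\bigl(c[L]-V(x)\bigr)\ge 2c[L]>0\qquad\text{for all }x\in M,
\]
so the continuous vector field $Y:=\nabla u+X$ is nowhere vanishing on $M$. By Lemma \ref{trivial bundle} the tangent bundle $TM$ is then trivial, whence its Euler class vanishes and $\chi(M)=\langle e(TM),[M]\rangle=0$ (equivalently, apply the Poincar\'e--Hopf theorem directly to $Y$). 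This contradicts $\chi(M)\ne 0$. Hence $c[L]=0$, and together with the normalization $\max V=0$ this gives $c[L]=\max V=0$, proving the first assertion.

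For the final statement, take $V\equiv 0$ and $\omega$ non-exact, so that $L(x,v)=\tfrac12 g_x(v,v)-\omega_x(v)$. By Lemma \ref{lem:non_exact_critical} we have $c[L]>0$. If $\SING(u)=\emptyset$ held, the first part of the theorem would force $c[L]=0$, a contradiction; therefore $\SING(u)\ne\emptyset$.

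The argument is short once the preliminary lemmas are available; the only step requiring a little care is the regularity claim $\SING(u)=\emptyset\Rightarrow u\in C^1(M)$ and the ensuing fact that the Hamilton--Jacobi equation then holds in the classical sense at every point, so that $\nabla u + X$ is a genuine continuous vector field whose norm is controlled by $c[L]-V$. The topological input --- deducing $\chi(M)=0$ from a nowhere-zero vector field --- is classical and is exactly what Lemma \ref{trivial bundle} (or Poincar\'e--Hopf) provides.
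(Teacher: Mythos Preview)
Your proof is correct and follows essentially the same line as the paper's: assume $\SING(u)=\emptyset$ so that $u\in C^1(M)$, suppose $c[L]>0$, and use the Hamilton--Jacobi equation to produce the nowhere-vanishing continuous vector field $\nabla u+X$, contradicting $\chi(M)\neq 0$ via Lemma~\ref{trivial bundle}. Your write-up is in fact a bit more careful than the paper's in spelling out the logical structure and in invoking Lemma~\ref{lem:non_exact_critical} for the final clause (the paper's reference to Lemma~\ref{trivial bundle} there appears to be a slip).
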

	
	\begin{proof}
		Suppose $c[L] > \max V = 0$ and $\SING(u) = \emptyset$. Then $u \in C^1(M)$, and equation \eqref{eq:HJ} implies
		\[
		\langle \nabla u + X, \nabla u + X \rangle = 2(c[L] - V) > 0.
		\]
		Thus $\nabla u + X$ constitutes a continuous nonvanishing vector field on $M$. By Lemma \ref{trivial bundle}, this implies $TM$ is trivial. However, the condition $\chi(M) \neq 0$ contradicts the existence of a trivial tangent bundle. Therefore $\SING(u) \neq \emptyset$. The final statement follows from Lemma \ref{trivial bundle}.
	\end{proof}
	
	\begin{Lem}\label{lem:singular_set_energy_gap}
		For every singular point $x \in \SING(u)$, the following strict energy gap inequality holds
		\[
		c[L] - V(x) > 0.
		\]
	\end{Lem}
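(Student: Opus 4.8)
The plan is a short argument by contradiction that plays the viscosity subsolution property of $u$ against the failure of uniqueness of the superdifferential at a singular point. First I would record the non-strict bound: by Lemma~\ref{lem:critical_value} one has $c[L] \ge 0$, while the normalization $\max_M V = 0$ gives $V(x) \le 0$, so $c[L] - V(x) \ge 0$ for every $x \in M$; hence only the equality case at a singular point needs to be excluded.

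Assume then, toward a contradiction, that $x \in \SING(u)$ with $c[L] - V(x) = 0$. Since $u$ is a weak KAM solution, equation~\eqref{eq:HJ} holds in the classical sense wherever $u$ is differentiable, so $H(x_n, Du(x_n)) = c[L]$ for any sequence $x_n \to x$ along which $u$ is differentiable. Choosing $x_n$ so that $Du(x_n)$ converges to a reachable differential and using the continuity of $H$ in the momentum variable, we obtain $H(x,p) = c[L]$ for every $p \in D^*u(x)$. Because $p \mapsto H(x,p) = \tfrac12 \|p + \omega_x\|_{g^*}^2 + V(x)$ is convex and $D^+u(x) = \operatorname{co} D^*u(x)$ by Remark~\ref{basic property of sc}(1), this upgrades to $H(x,p) \le c[L]$ for all $p \in D^+u(x)$. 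Substituting $c[L] = V(x)$ forces $\tfrac12\|p + \omega_x\|_{g^*}^2 \le 0$, i.e. $p = -\omega_x$, for every $p \in D^+u(x)$; since $D^+u(x) \ne \varnothing$, this means $D^+u(x) = \{-\omega_x\}$ is a singleton.

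I would then conclude by recalling the standard fact that a locally semiconcave function is differentiable at a point precisely when its superdifferential there consists of a single covector (see \cite{Cannarsa_Sinestrari_book} and Remark~\ref{basic property of sc}(1)). Thus $u$ would be differentiable at $x$, contradicting $x \in \SING(u)$, and therefore $c[L] - V(x) > 0$ for every $x \in \SING(u)$. (As a byproduct, one sees that if the gap vanished at a singular point then necessarily $c[L] = V(x) = 0$, consistent with Theorem~\ref{thm:sing_exists}.)

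I do not expect any serious obstacle here: the lemma is essentially a consequence of the convexity of the magnetic Hamiltonian in $p$ together with the representation $D^+u(x) = \operatorname{co} D^*u(x)$. The only step meriting attention is the passage from ``$u$ satisfies~\eqref{eq:HJ} pointwise on its set of differentiability'' to the inequality $H(x,\cdot) \le c[L]$ on the whole convex set $D^+u(x)$, which is exactly where convexity of $H$ in the momentum is invoked.
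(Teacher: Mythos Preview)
Your argument is correct and rests on the same idea as the paper's proof: convexity of $p\mapsto H(x,p)$ combined with the fact that a singular point carries at least two reachable differentials. The paper packages this directly rather than by contradiction: it takes two distinct $p_1,p_2\in D^*u(x)$, forms a proper convex combination, and invokes \emph{strict} convexity of the quadratic kinetic term to obtain $c[L]-V(x)>\tfrac12\|\tilde p\|_{g^*}^2\ge 0$ in one stroke. Your route through Lemma~\ref{lem:critical_value} and the normalization $\max V=0$ to secure the preliminary bound $c[L]-V(x)\ge 0$ is an unnecessary detour: that inequality already follows immediately from $H(x,p)=c[L]$ for any single $p\in D^*u(x)$, which gives $c[L]-V(x)=\tfrac12\|p+\omega_x\|_{g^*}^2\ge 0$ without appeal to the global normalization. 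Otherwise the two arguments are equivalent.
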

	
	\begin{proof}
		Let $x \in \SING(u)$ be a singular point. By definition of the singular set, there exist at least two distinct reachable differentials $p_1, p_2 \in D^*u(x)$ satisfying the Hamilton-Jacobi equation
		\[
		H(x, p_i) = \frac{1}{2}\|p_i + \omega_x\|^2_{g_x^*} + V(x) = c[L], \quad \text{for } i = 1,2,
		\]
		where $D^*u(x)$ denotes the reachable differential of $u$ at $x$, $\|\cdot\|_{g_x^*}$ is the dual norm induced by the Riemannian metric $g$ at $x$, $\omega_x$ is the evaluation of the 1-form $\omega$ at $x$ and $c[L]$ is the critical value of the Lagrangian $L$.
		
		Consider the convex combination of the shifted covectors
		\[
		\tilde{p} = t(p_1 + \omega) + (1 - t)(p_2 + \omega) = t p_1 + (1 - t) p_2 + \omega,
		\]
		where \( t \in (0,1) \). Using the strict convexity of the Hamiltonian's kinetic term \( \frac{1}{2} g^*(\cdot, \cdot) \), we obtain the strict inequality
		\[
		\begin{aligned}
			\frac{1}{2} g^*(\tilde{p}, \tilde{p}) + V(x) 
			&< t \left( \frac{1}{2} g^*(p_1 + \omega, p_1 + \omega) \right) + (1 - t) \left( \frac{1}{2} g^*(p_2 + \omega, p_2 + \omega) \right) + V(x) \\
			&= t \left( c[L] - V(x) \right) + (1 - t) \left( c[L] - V(x) \right) + V(x) \\
			&= c[L],
		\end{aligned}
		\]
		where we have used the Hamilton-Jacobi equation \( \frac{1}{2} g^*(p_i + \omega, p_i + \omega) = c[L] - V(x) \) for \( i = 1,2 \). This inequality implies the following energy gap estimate
		\[
		c[L] - V(x) > \frac{1}{2}g^*(\tilde{p}, \tilde{p}) \geq 0.  \qedhere
		\]
	\end{proof}
	
	For the magnetic Lagrangian $L(x,v) = \frac{1}{2}g_x(v,v) - \omega_x(v) - V(x)$ on $(M,g)$, define
	\[
	W := \{x \in M \mid c[L] > V(x)\}.
	\]
	On $W$, the Hamilton-Jacobi equation has equivalent forms:
	\begin{align}
		&\frac{1}{2}\|Du + \omega\|^2_{g^*} + V = c[L], \label{eq:HJ1}\tag{HJ$_1$} \\
		&\frac{\|Du + \omega\|^2_{g^*}}{2(c[L]-V)} = 1. \label{eq:HJ2}\tag{HJ$_2$}
	\end{align}
	
	The rescaled metric $\tilde{g}_x = (c[L]-V(x))g_x$ induces the dual metric $\tilde{g}^*_x = g^*_x/(c[L]-V(x))$. In this geometry, the Hamilton-Jacobi equation becomes
	\begin{equation}\label{meq}
		\frac{1}{2}\|\nabla^{\tilde{g}} u + X^{\tilde{g}}\|^2_{\tilde{g}} = 1,
	\end{equation}
	where $X^{\tilde{g}}$ is the $\tilde{g}$-dual of $\omega$ and $\nabla^{\tilde{g}}u$ is the $\tilde{g}$-gradient.
	
	Let $f(x) := c[L] - V(x)$ for $x \in W$ be the energy gap function. Consider the modified Lagrangian 
	\[
	\tilde{L}(x, v) := \frac{1}{2}\tilde{g}_x(v, v) - \omega_x(v) = \frac{f(x)}{2}g_x(v, v) - \omega_x(v),
	\]
	with the Hamiltonian defined by
	\[
	\tilde{H}(x, p) = \sup_{v \in T_x M} \bigl\{ \langle p, v \rangle - \tilde{L}(x, v) \bigr\} = \frac{1}{2}\tilde{g}^*_x(p + \omega_x, p + \omega_x).
	\]
	For a weak KAM solution $u$, we obtain
	\begin{align}
		\tilde{H}(x, Du(x)) &= \frac{1}{2}\tilde{g}^*_x(Du(x) + \omega_x, Du(x) + \omega_x) \notag \\
		&= \frac{1}{2} \tilde{g}_x(\nabla^{\tilde{g}} u(x) + X(x), \nabla^{\tilde{g}} u(x) + X(x)) = 1,
	\end{align}
	where $\nabla^{\tilde{g}}u$ denotes the gradient with respect to $\tilde{g}$. The Hamiltonian vector fields are related by
	\[
	\tilde{H}_p(x,p) = \frac{1}{f(x)}H_p(x,p),
	\]
	with the original Hamiltonian given by
	\[
	H(x, p) = \frac{1}{2}g^*_x(p + \omega_x, p + \omega_x) + V(x).
	\]
	
	We now introduce two distinct gradient flow systems on $W$. Define the minimal selection
	\[
	p^\#_u(x) := \arg\min\{H(x,p) \mid p \in D^+u(x)\}.
	\]
	The first gradient flow system is
	\begin{equation}\label{G1}
		\begin{cases}
			\dot{x}(t) = H_{p}(x(t), p_{u}^{\#}(x(t))) \\
			x(0) = x_0 \in W.
		\end{cases} \tag{G1}
	\end{equation}
	The second gradient flow system incorporates the conformal scaling factor $1/f(x)$ and takes the form
	\begin{equation}\label{G2}
		\begin{cases}
			\dot{x}(t) = \tilde{H}_{p}\bigl(x(t), p_{u}^{\#}(x(t))\bigr) = \dfrac{1}{f(x(t))} H_{p}\bigl(x(t), p_{u}^{\#}(x(t))\bigr), \\
			x(0) = x_0 \in W.
		\end{cases} \tag{G2}
	\end{equation}
	This system represents the gradient flow with respect to the conformally modified metric $\tilde{g}$.
	
	\begin{Pro}\label{prop:reparam}
		The solution curves of \eqref{G1} and \eqref{G2} coincide as sets in $W$, with only their time parameterizations differing through the energy gap factor $f(x) = c[L]-V(x)$. 
	\end{Pro}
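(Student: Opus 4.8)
\emph{Plan of proof.} The proof rests on a straightforward time reparametrization, the point being that the right-hand sides of \eqref{G1} and \eqref{G2} are the \emph{same} vector field up to the positive scalar factor $1/f$, where $f(x)=c[L]-V(x)$. First I would record the elementary facts that on $W$ the function $f$ is of class $C^{2}$ and satisfies $f>0$, so that along any Lipschitz curve contained in a compact subset of $W$ the composition $f\circ x$ is continuous and bounded between two positive constants. I would also note that the minimal selection is unaffected by the rescaling: since $\tilde H(x,p)=\bigl(H(x,p)-V(x)\bigr)/f(x)$ and $f(x)>0$ is a fixed positive number for fixed $x$, one has $\arg\min_{p\in D^{+}u(x)}\tilde H(x,p)=\arg\min_{p\in D^{+}u(x)}H(x,p)=p_{u}^{\#}(x)$, which is why the same symbol $p_{u}^{\#}$ legitimately appears in both \eqref{G1} and \eqref{G2}.

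Next, let $x\colon[0,T)\to W$ be the solution of \eqref{G1} issuing from $x_{0}$ (unique by Proposition \ref{pro:uniqueness}), and introduce the new time variable
\[
s(t):=\int_{0}^{t} f\bigl(x(\tau)\bigr)\,d\tau .
\]
Because $f\circ x$ is continuous and strictly positive, $s(\cdot)$ is $C^{1}$, strictly increasing, $s(0)=0$, and on each compact subinterval it is bi-Lipschitz; hence it admits a $C^{1}$ inverse $t=t(s)$, defined on an interval $[0,S)$, with $t'(s)=1/f\bigl(x(t(s))\bigr)$. Setting $y(s):=x\bigl(t(s)\bigr)$ we have $y(0)=x_{0}$, and for a.e.\ $s$ the chain rule gives
\[
\dot y(s)=\dot x\bigl(t(s)\bigr)\,t'(s)
=\frac{1}{f\bigl(x(t(s))\bigr)}\,H_{p}\bigl(x(t(s)),p_{u}^{\#}(x(t(s)))\bigr)
=\tilde H_{p}\bigl(y(s),p_{u}^{\#}(y(s))\bigr),
\]
so $y$ solves \eqref{G2}. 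The same computation run with the roles of \eqref{G1} and \eqref{G2} interchanged — using the time change $s\mapsto\int_{0}^{s} f\bigl(y(\rho)\bigr)^{-1}\,d\rho$, which is exactly the inverse map $s\mapsto t(s)$ above — shows conversely that every solution of \eqref{G2} becomes a solution of \eqref{G1} after reparametrization.

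Finally, I would conclude as follows. The modified Lagrangian $\tilde L(x,v)=\tfrac{f(x)}{2}g_{x}(v,v)-\omega_{x}(v)$ is itself a magnetic Lagrangian on $W$ (metric $\tilde g=fg$, which is genuinely Riemannian there, and $1$-form $\omega$), so the local uniqueness argument of Proposition \ref{pro:uniqueness} applies equally to \eqref{G2}; thus $y$ is \emph{the} solution of \eqref{G2} from $x_{0}$. Consequently the assignment $x\mapsto y$ is a bijection between the solution curves of \eqref{G1} and \eqref{G2} through $x_{0}$, realized by the homeomorphism $t\leftrightarrow s$ of the two parameter intervals; in particular the two curves have the same image in $W$, and their parameterizations differ precisely through $ds/dt=f(x)=c[L]-V(x)$, which is the assertion. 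The only point requiring care is the one already used above: since the flows are \emph{a priori} merely Lipschitz and satisfy their equations only almost everywhere, one must verify that the reparametrization and its inverse are absolutely continuous — in fact $C^{1}$ with derivatives bounded away from $0$ and $\infty$ on compact subintervals — so that an ``a.e.\ $t$'' statement transfers to an ``a.e.\ $s$'' statement; this is immediate from the continuity and positivity of $f\circ x$, and everything else is the chain rule.
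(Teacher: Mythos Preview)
Your proof is correct and follows essentially the same route as the paper: both introduce the new time variable $s(t)=\int_{0}^{t} f(x(\tau))\,d\tau$, invert it, and apply the chain rule to pass from \eqref{G1} to \eqref{G2} and back. Your write-up is in fact more careful than the paper's, adding the observation that the minimal selection $p_{u}^{\#}$ is unchanged under the rescaling, invoking uniqueness for \eqref{G2}, and addressing the a.e.\ transfer under the bi-Lipschitz time change; the paper's version omits these points and gives only the bare reparametrization computation.
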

	
	\begin{proof}
		Given a solution $x(t)$ of \eqref{G1}, we construct a solution to \eqref{G2} via time reparameterization. Define the rescaled time variable
		\[
		s(t) = \int_{0}^{t} f(x(\tau))\,d\tau = \int_{0}^{t} \big(c[L] - V(x(\tau))\big)\,d\tau,
		\]
		which is strictly increasing since $f(x)>0$ on $W$. Let $t(s)$ denote the inverse function and set $y(s) := x(t(s))$.
		
		Differentiating the reparameterized curve yields
		\begin{align*}
			\frac{dy}{ds} &= \frac{dx}{dt}\Big|_{t=t(s)} \cdot \frac{dt}{ds} \\
			&= \frac{1}{f(x(t(s)))} H_{p}\big(x(t(s)), p_{u}^{\#}(x(t(s)))\big) \quad \text{(using $\frac{dt}{ds} = \frac{1}{f(x(t(s)))}$)} \\
			&= \tilde{H}_{p}\big(y(s), p_{u}^{\#}(y(s))\big) \quad \text{(by definition of $\tilde{H}$)}.
		\end{align*}
		Thus $y(s)$ satisfies \eqref{G2} with $y(0) = x_0$, proving the trajectories coincide up to parameterization.
		
		For the converse, given $y(s)$ solving \eqref{G2}, define $t(s) = \int_0^s \frac{d\sigma}{f(y(\sigma))}$. Then
		\[
		\frac{d}{dt}y(s(t)) =\frac{dy}{ds}\Big|_{s=s(t)} \cdot\frac{ds}{dt} = H_p(y(s(t)),p_u^\#(y(s(t)))),
		\]
		showing $y(s(t))$ solves \eqref{G1}.
	\end{proof}
	
	We now analyze the modified Hamilton-Jacobi equation associated with the rescaled dynamics
	\begin{equation}\label{eq:HJ-modified}
		\tilde{H}(x, Du) = \frac{1}{2}\tilde{g}^*_x(Du + \omega_x, Du + \omega_x) 
		= \frac{1}{2} \tilde{g}_x(\nabla^{\tilde{g}} u + X, \nabla^{\tilde{g}} u + X) = 1, \quad x \in W,
	\end{equation}
	where
	\begin{itemize}  
		\item $u \in C(M,\mathbb{R})$ is a weak KAM solution of the original Hamilton-Jacobi equation,
		\item $\tilde{g}^*_x$ denotes the dual metric of $\tilde{g}_x = (c[L]-V(x))g_x$ at $x \in W$,
		\item $\nabla^{\tilde{g}} u$ is the gradient of $u$ with respect to $\tilde{g}$,
		\item $X =X^{\tilde g}= \omega^\sharp_{\tilde{g}}$ is the vector field dual to $\omega$ via the $\tilde{g}$-musical isomorphism.
	\end{itemize}
	
	\begin{Lem}\label{lem:hessian_estimate}
		Let $u$ be a weak KAM solution of \eqref{eq:HJ} and consider a point $x \in W$ where $u$ is twice differentiable. Then the following identity holds in the geometry of the conformal metric $\tilde{g}$
		\begin{equation}\label{eq:hessian-identity}
			(\operatorname{Hess}^{\tilde{g}} u + \nabla^{\tilde{g}} X)(\xi, \nabla^{\tilde{g}} u + X) = 0 \quad \forall \xi \in T_x M.
		\end{equation}
		Moreover, for any bounded open subset $U \subset W$, there exists a constant $\kappa = \kappa(U) > 0$ such that the following uniform estimate holds at all points \(x\in U\) of twice differentiability:
		\begin{equation}\label{eq:hessian-estimate}
			(\operatorname{Hess}^{\tilde{g}} u + \nabla^{\tilde{g}} X)(\xi, \xi) \leq \kappa \left( \|\xi\|_{\tilde{g}}^2 - \frac{1}{2} \langle \xi, \nabla^{\tilde{g}} u + X \rangle_{\tilde{g}}^2 \right)
		\end{equation}
		for all $\xi \in T_x M$.
	\end{Lem}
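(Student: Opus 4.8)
Throughout, the plan is to transplant the proof of Lemma~\ref{lem:hessian_property} to the conformal geometry. Work in the metric $\tilde g = f g$ with $f = c[L]-V > 0$ on $W$, in which \eqref{eq:HJ-modified} reads $\tilde g(Y,Y)\equiv 2$ where $Y := \nabla^{\tilde g} u + X$. The organizing observation is that $\operatorname{Hess}^{\tilde g} u + \nabla^{\tilde g} X$ is exactly $\nabla^{\tilde g}\beta$, the $\tilde g$-covariant derivative of the $1$-form $\beta := du + \omega$: indeed $\beta = \tilde g(Y,\cdot)$, and since $X$ is the $\tilde g$-dual of $\omega$ one has $(\nabla^{\tilde g}X)(\xi,\cdot) = \tilde g(\nabla^{\tilde g}_\xi X,\cdot) = \nabla^{\tilde g}_\xi\omega$, so $\operatorname{Hess}^{\tilde g}u + \nabla^{\tilde g}X = \nabla^{\tilde g}(du) + \nabla^{\tilde g}\omega = \nabla^{\tilde g}\beta$. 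Hence this tensor has symmetric part $\operatorname{Hess}^{\tilde g}u$ plus the symmetrization of $\nabla^{\tilde g}\omega$, and antisymmetric part $\tfrac12 d\beta = \tfrac12 d\omega$ --- the magnetic $2$-form --- which is the one genuinely new feature compared with the Riemannian case.

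The identity \eqref{eq:hessian-identity} then falls out by differentiating the constraint. For $\xi\in T_xM$, metric-compatibility of the Levi-Civita connection of $\tilde g$ gives $0 = \tfrac12\,\xi\bigl(\tilde g(Y,Y)\bigr) = \tilde g(\nabla^{\tilde g}_\xi Y, Y)$; at a point of twice differentiability Proposition~\ref{thm:second_differentiability} identifies $\nabla^{\tilde g}_\xi(\nabla^{\tilde g}u)$ with $\operatorname{Hess}^{\tilde g}u(\xi,\cdot)^{\sharp}$, so $\tilde g(\nabla^{\tilde g}_\xi Y, Y) = (\operatorname{Hess}^{\tilde g}u + \nabla^{\tilde g}X)(\xi, Y) = 0$. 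Taking $\xi = Y$ gives $(\operatorname{Hess}^{\tilde g}u + \nabla^{\tilde g}X)(Y,Y) = 0$, the exact value that makes the right-hand side of \eqref{eq:hessian-estimate} vanish at $\xi = Y$ (and explains the factor $\tfrac12$ there, since $\|Y\|_{\tilde g}^2 = 2$).

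For the estimate \eqref{eq:hessian-estimate} I would argue in three steps. \emph{Crude bound:} local semiconcavity of a viscosity solution is a metric-independent notion, so by Theorem~\ref{thm:semiconcave_equivalence} the solution $u$ is uniformly geodesically $\tilde g$-semiconcave on any $U$ with $\overline U\subset W$ compact, giving $\operatorname{Hess}^{\tilde g}u(\xi,\xi)\leq K_0\|\xi\|_{\tilde g}^2$; since $f$ is bounded away from $0$ and $\omega$ is $C^2$ on $\overline U$, the symmetric part of $\nabla^{\tilde g}X$ (the only part seen by a quadratic form) is bounded there, so $(\operatorname{Hess}^{\tilde g}u + \nabla^{\tilde g}X)(\xi,\xi)\leq K_1\|\xi\|_{\tilde g}^2$. \emph{Sharpening via the identity:} decompose $\xi = \xi^{\perp} + \langle\xi,\hat Y\rangle_{\tilde g}\,\hat Y$ with $\hat Y = Y/\|Y\|_{\tilde g}$ and expand the quadratic form; the $\hat Y\otimes\hat Y$ coefficient vanishes by the identity just proved, the orthogonal block is $\leq K_1\|\xi^{\perp}\|_{\tilde g}^2 = K_1\bigl(\|\xi\|_{\tilde g}^2 - \tfrac12\langle\xi, Y\rangle_{\tilde g}^2\bigr)$, and one is left with a mixed term. \emph{Mollification:} convolve the resulting pointwise inequality with the bump functions of Lemma~\ref{modify}, control the Christoffel-symbol commutator by a Cauchy--Schwarz $o(1/m)$ estimate, and push the projection term through Jensen's inequality --- writing $\langle\xi,Y\rangle_{\tilde g} = \xi^i(u_i+\omega_i)$ so that it mollifies cleanly --- exactly as in the proof of Theorem~\ref{thm:singularity_persistence}.

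I expect the main obstacle to be the mixed term in the second step. In Lemma~\ref{lem:hessian_property} the analogous term disappears because $\operatorname{Hess}u(\xi^{\perp},\nabla u)=0$; here the identity only gives $(\operatorname{Hess}^{\tilde g}u + \nabla^{\tilde g}X)(\xi^{\perp}, Y)=0$ for the \emph{full} tensor, so its \emph{symmetric} part evaluated at $(\xi^{\perp},\hat Y)$ equals $-\tfrac1{2\|Y\|_{\tilde g}}\,d\omega(\xi^{\perp}, Y)$, i.e.\ is governed by the magnetic force $\iota_Y d\omega$ and does not vanish when $\omega$ is non-exact. A crude Young-type absorption is not available, since it would introduce an unwanted positive multiple of $\langle\xi,\hat Y\rangle_{\tilde g}^2$; the resolution must instead exploit finer structure --- that the skew part $\tfrac12 d\omega$ drops out of $(\xi,\xi)\mapsto(\cdot)(\xi,\xi)$, together with the conformal weight $f$ that is forced on us by the passage from \eqref{eq:HJ1} to \eqref{eq:HJ2} --- to keep the bound in the shape $\kappa\bigl(\|\xi\|_{\tilde g}^2 - \tfrac12\langle\xi, Y\rangle_{\tilde g}^2\bigr)$. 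This is the delicate heart of the argument and the one place where it genuinely departs from the Riemannian template; carrying it out is precisely the ``careful computation of some geometric tensors'' advertised in step~(3) of the introduction.
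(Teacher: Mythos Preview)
Your treatment of the identity~\eqref{eq:hessian-identity} and of the crude upper bound $(\operatorname{Hess}^{\tilde g}u+\nabla^{\tilde g}X)(\xi,\xi)\le\kappa\|\xi\|_{\tilde g}^2$ matches the paper exactly. But the mollification you list as your third step does not belong here at all: Lemma~\ref{lem:hessian_estimate} is a purely pointwise statement at twice-differentiable points, and the convolution argument you sketch is the content of the proof of Theorem~\ref{thm:singular_persistence}, not of this lemma.

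For the estimate itself, the paper's argument is much shorter than you anticipate. After decomposing $\xi=\xi^\perp+\tfrac12\langle\xi,\tilde v\rangle_{\tilde g}\tilde v$ (with $\tilde v=\nabla^{\tilde g}u+X$) it simply writes $B(\xi,\xi)=B(\xi^\perp,\xi^\perp)$, citing only the one-sided relation $B(\cdot,\tilde v)=0$, and then applies the crude bound to $\xi^\perp$. There is no treatment of the skew part $\tfrac12 d\omega$, no mixed term, no further ``computation of geometric tensors.'' Your worry is nonetheless legitimate: bilinear expansion actually yields $B(\xi,\xi)=B(\xi^\perp,\xi^\perp)+\tfrac12\langle\xi,\tilde v\rangle_{\tilde g}\,B(\tilde v,\xi^\perp)$, and since $B=\nabla^{\tilde g}(du+\omega)$ is not symmetric when $d\omega\ne0$ the residue equals $\tfrac12\langle\xi,\tilde v\rangle_{\tilde g}\,d\omega(\tilde v,\xi^\perp)$, which need not vanish. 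Concretely, on the flat $\mathbb{T}^2$ with $V\equiv0$, $u\equiv0$, and $\omega=\sqrt2(\cos x\,dx+\sin x\,dy)$ one has $c[L]=1$, $\tilde g=g$, and at the origin $B(\xi,\xi)=\sqrt2\,\xi^1\xi^2$ while the right side of~\eqref{eq:hessian-estimate} is $\kappa(\xi^2)^2$; no finite $\kappa$ suffices. So you have correctly diagnosed a genuine subtlety that the paper's own proof glosses over---but your proposal does not close it either: the appeal to ``finer structure'' and the conformal weight remains a hope rather than an argument.
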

	
	\begin{proof}
		For any tangent vector $\xi \in T_x M$, since $u$ satisfies the modified Eikonal equation \eqref{meq}, Theorem~\ref{thm:second_differentiability} yields the orthogonality relation
		\[
		\langle \nabla^{\tilde{g}}_{\xi} (\nabla^{\tilde{g}} u + X), \nabla^{\tilde{g}} u + X \rangle_{\tilde{g}} = 0,
		\]
		where $\nabla^{\tilde{g}}$ denotes the Levi-Civita connection of the conformal metric $\tilde{g}$. This immediately implies the vanishing of the quadratic form
		\[
		(\operatorname{Hess}^{\tilde{g}} u + \nabla^{\tilde{g}} X)(\xi, \nabla^{\tilde{g}} u + X) = 0.
		\]
		
		Since $u$ is locally semiconcave, there exists a constant $\kappa = \kappa(U) > 0$ such that for all $x \in U$ where $u$ is twice differentiable,
		\begin{equation}\label{eq:basic-hessian-bound}
			(\operatorname{Hess}^{\tilde{g}} u + \nabla^{\tilde{g}} X)(\xi, \xi) \leq \kappa \langle \xi, \xi \rangle_{\tilde{g}}, \quad \forall \xi \in T_x M.
		\end{equation}
		Now we set
		\[
		\tilde{v} := \nabla^{\tilde{g}} u + X \quad \text{and} \quad \xi^\perp := \xi - \frac{\langle \xi, \tilde{v} \rangle_{\tilde{g}}}{|\tilde{v}|_{\tilde{g}}^2} \tilde{v} = \xi - \frac{1}{2}\langle \xi, \tilde{v} \rangle_{\tilde{g}} \tilde{v},
		\]
		where we used $|\tilde{v}|_{\tilde{g}}^2 = 2$ from equation \eqref{meq}. 
		
		Using the orthogonality relation $(\operatorname{Hess}^{\tilde{g}} u + \nabla^{\tilde{g}} X)(\cdot, \tilde{v}) = 0$, we obtain the refined estimate
		\begin{align*}
			(\operatorname{Hess}^{\tilde{g}} u + \nabla^{\tilde{g}} X)(\xi, \xi) &= (\operatorname{Hess}^{\tilde{g}} u + \nabla^{\tilde{g}} X)(\xi^\perp, \xi^\perp) \\
			&\leq \kappa \langle \xi^\perp, \xi^\perp \rangle_{\tilde{g}} \\
			&= \kappa \left( \langle \xi, \xi \rangle_{\tilde{g}} - \frac{1}{2} \langle \xi, \tilde{v} \rangle_{\tilde{g}}^2 \right). \qedhere
		\end{align*}
	\end{proof} 
	\begin{The}[Global Propogation of Singularity]\label{thm:singular_persistence}
		Let $(M,g)$ be a connected closed Riemannian manifold and $L(x,v) = \frac{1}{2}g_x(v,v) - \omega_x(v) - V(x)$ be a magnetic mechanical Lagrangian. For any weak KAM solution $u$ of \eqref{eq:HJ} and the bounded open subset $W = \{x \in M \mid c[L] > V(x)\}$, the following propagation property holds: for any initial point $x_0 \in W \cap \SING(u)$, the generalized gradient flow $\gamma(t)$ with $\gamma(0) = x_0$ satisfies:
		\[
		\gamma(t) \in \SING(u) \quad \text{for all } t \geq 0 \text{ where the flow is defined.}
		\]
		That is, the singular set $\SING(u)$ is invariant under the generalized gradient flow dynamics.
	\end{The}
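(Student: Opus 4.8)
The plan is to reduce the magnetic problem to the conformal Eikonal equation \eqref{meq} and then run, with the necessary modifications, the regularization scheme used in Theorem~\ref{thm:singularity_persistence}. First I would pass to the metric $\tilde g=fg$ with $f=c[L]-V>0$ on $W$, in which $u$ solves $\tfrac12\|\nabla^{\tilde g}u+X\|_{\tilde g}^2=1$, and record the reformulation of the singular set: setting $\bar H(x,p):=\|p+\omega_x\|_{\tilde g^*}^2-2=\tfrac{2}{f(x)}\bigl(H(x,p)-c[L]\bigr)$, the strict convexity of $H(x,\cdot)$ together with $H\equiv c[L]$ on $D^*u(x)$ and $D^+u(x)=\operatorname{co}D^*u(x)$ gives, exactly as in Lemma~\ref{lem:singular_set_energy_gap},
\[
x\in\SING(u)\iff \bar H\bigl(x,p_u^\#(x)\bigr)<0,
\]
and in particular $\SING(u)\subset W$, so the hypothesis $x_0\in W$ is automatic; note $p_u^\#$ is the same minimal selection for $H$, $\tilde H$ and $\bar H$, since these differ at fixed $x$ by an increasing affine change. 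By Proposition~\ref{prop:reparam} the orbits of \eqref{G1} and \eqref{G2} coincide, so it suffices to prove $\gamma(s)\in\SING(u)$ along a solution $\gamma$ of \eqref{G2} --- the gradient flow of $u$ for $\tilde g$ --- and for this I would fix $T>0$ with $\gamma([0,T])\subset W$, choose a bounded open $U$ with $\gamma([0,T])\subset U\subset\bar U\subset W$, prove the claim on $[0,T]$, and let $T$ exhaust the maximal interval on which $\gamma$ stays in $W$.

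For the regularization, using Lemma~\ref{modify} in local coordinates on $U$ I would take mollifiers $\eta^m$ and $u^m:=\eta^m*u$ with $u^m\to u$ uniformly on $\bar U$, $u^m$ uniformly semiconcave, $|Du^m|\le\operatorname{Lip}(u)$, and $Du^m(x_0)\to p_0:=p_u^\#(x_0)$, so that $\bar H(x_0,p_0)<0$. Convolving the a.e.-valid estimate \eqref{eq:hessian-estimate} and handling the smooth coefficients $\tilde\Gamma_{ij}^k$, $\nabla^{\tilde g}X$, $\omega$ as in the proof of Theorem~\ref{thm:singularity_persistence} --- Cauchy--Schwarz for the Christoffel commutator term and Jensen for the $\langle\xi,\nabla^{\tilde g}u+X\rangle_{\tilde g}^2$ term, with the smooth vector field $X$ convolving harmlessly (exactly the point that fails for Finsler metrics, Remark~\ref{rem:Finsler}) --- yields, for bounded $\xi$ and $x\in\bar U$,
\[
(\operatorname{Hess}^{\tilde g}u^m+\nabla^{\tilde g}X)(\xi,\xi)\le\kappa\bigl(\|\xi\|_{\tilde g}^2-\tfrac12\langle\xi,\nabla^{\tilde g}u^m+X\rangle_{\tilde g}^2\bigr)+o(1),\qquad m\to\infty.
\]
Then I would let $\gamma_m$ solve $\dot\gamma_m=w_m(\gamma_m)$ with $w_m:=\nabla^{\tilde g}u^m+X$, set $\psi_m(s):=\|w_m(\gamma_m(s))\|_{\tilde g}^2-2$, and differentiate; metric compatibility of the Levi-Civita connection of $\tilde g$ gives $\tfrac12\psi_m'=(\operatorname{Hess}^{\tilde g}u^m+\nabla^{\tilde g}X)(w_m,w_m)$, so the estimate with $\xi=w_m$ and $\|w_m\|_{\tilde g}^2=\psi_m+2$ becomes $\psi_m'\le-\kappa\psi_m(\psi_m+2)+o(1)\le-2\kappa\psi_m+o(1)$. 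Gronwall's inequality together with $\psi_m(0)\to\bar H(x_0,p_0)=:a<0$ and the uniform vanishing of the $o(1)$ term then gives $\limsup_{m\to\infty}\psi_m(s)\le e^{-2\kappa s}a<0$ on $[0,T]$.

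To conclude, $\{\gamma_m\}$ is uniformly Lipschitz since $|w_m|$ is bounded on $\bar U$, so by Arzel\`a--Ascoli a subsequence converges uniformly; Lemma~\ref{uniform semiconcave} identifies the limit as a solution of the differential inclusion \eqref{differential inclusion} associated with $\tilde L$, which is unique by Proposition~\ref{pro:uniqueness} (equivalently, by uniqueness for \eqref{G1} transported through Proposition~\ref{prop:reparam}), so the full sequence converges to the \eqref{G2}-solution $\gamma$. Since every limit point of $Du^m(\gamma_m(s))$ lies in $D^+u(\gamma(s))$ by Lemma~\ref{uniform semiconcave}, one gets $\bar H\bigl(\gamma(s),p_u^\#(\gamma(s))\bigr)\le\liminf_{m\to\infty}\psi_m(s)<0$, that is $\gamma(s)\in\SING(u)$; transporting back through Proposition~\ref{prop:reparam} gives the same conclusion along \eqref{G1}, and since $\SING(u)\subset W$ the flow never leaves $W$, proving the invariance.

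The step I expect to be the main obstacle is not any single computation --- the Hessian estimate and the reparametrization are already in hand --- but the bookkeeping forced by the degeneracy of $\tilde g$ on $\partial W$, where $f=0$: the conformal geometry and all the estimates above are available only on compact subsets of $W$, so the whole argument must be localized in time to intervals on which the flow remains in $W$, and one must argue separately, from $\SING(u)\subset W$ and the continuity of $\gamma$, that the flow cannot reach $\partial W$, so that "the flow's domain" in the statement is genuinely covered. A lesser but nontrivial point is verifying that the conformal Hessian inequality mollifies with only an $o(1)$ error: the relevant operator is $\operatorname{Hess}^{\tilde g}u+\nabla^{\tilde g}X$ rather than $\operatorname{Hess} u$, but since $X$ and the $\tilde g$-Christoffel symbols are smooth this contributes only lower-order terms, and the defining structure of the inequality --- in particular the coefficient $\tfrac12$ on $\langle\xi,\nabla^{\tilde g}u+X\rangle_{\tilde g}^2$, which comes from $\|\nabla^{\tilde g}u+X\|_{\tilde g}^2\equiv2$ --- survives convolution through Jensen's inequality exactly as in the Eikonal case.
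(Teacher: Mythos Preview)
Your proposal is correct and follows essentially the same strategy as the paper's proof: conformal reduction to \eqref{meq}, mollification via Lemma~\ref{modify}, convolution of the Hessian estimate~\eqref{eq:hessian-estimate} with the Christoffel commutator handled by Cauchy--Schwarz and the squared inner-product term by Jensen, a Gronwall bound along the approximate flow $\dot\gamma_m=\nabla^{\tilde g}u^m+X$, and limit passage via Arzel\`a--Ascoli together with Lemma~\ref{uniform semiconcave} and Proposition~\ref{pro:uniqueness}. Your explicit Gronwall constant $2\kappa$ (from the trivial inequality $-\kappa\psi_m^2\le0$) and your attention to localizing on $U\subset\bar U\subset W$ away from the degeneracy of $\tilde g$ on $\partial W$ are slightly more careful than the paper's presentation, but the argument is otherwise identical.
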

	
	\begin{proof}
		We work in local coordinates $(x^1,\ldots,x^n)$ on $W$ with metric matrix $(\tilde g_{ij})_{n\times n} = (\tilde g(\partial_i,\partial_j))_{n\times n}$ and it's inverse matrix $(\tilde g^{ij})_{n \times n} = (\tilde g(\partial_i,\partial_j))_{n\times n}^{-1}$, Christoffel symbols $\tilde \Gamma_{ij}^k$, $\omega = \omega_j dx^j$, $X = X^j\partial_j$ where $X^j=\tilde  g^{jk} \omega_k$ and $\tilde{v} = \nabla^{\tilde g } u + X = (\tilde g^{ij}u_i + X^j)\partial_j = \tilde{v}^j\partial_j$.
		
		Since
		\[
		(\operatorname{Hess}^{\tilde g} u + \nabla^{\tilde g } X)(\xi, \xi) \leq \kappa \left( \langle \xi, \xi \rangle_{\tilde  g} - \frac{1}{2} \langle \xi, \tilde{v} \rangle^2_{\tilde  g} \right)
		\]
		for all \( x \in W \) where \( u \) is twice differentiable and each \( \xi \in T_x M \) by Lemma \ref{lem:hessian_estimate},
		we derive in coordinates:
		\begin{equation}\label{eq:Hessian-local}
			\xi^i\left(u_{ij} - \tilde \Gamma_{ij}^k u_k - \partial_i X^k \tilde g_{kj} + X^k \tilde \Gamma_{ik}^l \tilde g_{lj}\right)\xi^j \leq \kappa\left(\tilde g_{ij}\xi^i\xi^j - \frac{1}{2}(\tilde g_{ij}\xi^i\tilde{v}^j)^2\right).
		\end{equation}
		
		By Lemma \ref{modify}, we can take smooth modifiers $\eta^m \geq 0$ with $\text{supp}(\eta^m) \subset B(0,1/m)$ and $\int \eta^m dx = 1$  such that
		\[
		u^m(x) := (\eta^m \ast u)(x) = \int \eta^m(x-y)u(y)dy
		\]
		satisfying
		\begin{itemize}
			\item $|Du^m| \leq \text{Lip}(u)$, 
			\item $u^m$ are uniformly semiconcave, 
			\item $u^m \to u$ uniformly on compact sets,
			\item $Du^m(x_0) \to p_0\in D^+u(x_0)$ with $\frac{1}{2}\|p_0+\omega \|_{\tilde g^*}<1$.
		\end{itemize}
		
		For the regularized functions, we compute exactly:
		\begin{align*}
			&(\operatorname{Hess}^{\tilde g}u^m + \nabla^{\tilde g} X)(\xi,\xi) \\
			&= \xi^i\left(\partial_i\partial_j u^m - \tilde \Gamma_{ij}^k \partial_k u^m + \langle\nabla_{\partial_i}^{\tilde g} X,\partial_j\rangle_{\tilde  g}\right)\xi^j \\
			&= \xi^i\left(u^m_{ij} - \tilde \Gamma_{ij}^k u^m_k - \partial_i X^k \tilde g_{kj} + X^k \tilde \Gamma_{ik}^l \tilde g_{lj}\right)\xi^j \\
			&=  \underbrace{\int \eta^m(x-y)u_{ij}(y)\xi^i\xi^j dy 
				- \tilde \Gamma_{ij}^k(x) \int \eta^m(x-y)u_k(y)\xi^i\xi^j dy}_{(A)} \\
			&\quad + \underbrace{\xi^i\xi^j\left(-\partial_i X^k(x)\tilde  g_{kj}(x) + X^k(x) \tilde \Gamma_{ik}^l(x) \tilde g_{lj}(x)\right)}_{(B)}.
		\end{align*}
		
		We can rewrite the sum as
		\[
		\begin{aligned}
			(A)+(B) &= \int \eta^m(x-y)\left[u_{ij}(y) - \tilde \Gamma_{ij}^k(y)u_k(y) - \partial_i X^k(y)\tilde g_{kj}(y) + X^k(y)\tilde \Gamma_{ik}^l(y)\tilde g_{lj}(y)\right]\xi^i\xi^j dy \quad (1) \\
			&\quad + \int \eta^m(x-y)\left[\tilde \Gamma_{ij}^k(y) - \tilde \Gamma_{ij}^k(x)\right]u_k(y)\xi^i\xi^j dy\quad (2) \\
			&\quad + \left[\left(\eta^m \ast \left(\partial_i X^k \tilde g_{kj} - X^k \tilde \Gamma_{ik}^l \tilde g_{lj}\right)\right)(x) - \left(\partial_i X^k \tilde g_{kj} - X^k \tilde \Gamma_{ik}^l \tilde g_{lj}\right)(x)\right]\xi^i\xi^j. \quad (3)
		\end{aligned}
		\]
		
		Since
		\[
		\begin{aligned}
			\int \eta^m(x - y) (\xi^i \tilde g_{ij}(y) \tilde{v}^j(y))^2 dy &= \int (\eta^m)^{\frac{1}{2}\times 2}(x - y) dy \int (\eta^m)^{\frac{1}{2}\times 2}(x - y) (\xi^i \tilde g_{ij}(y) \tilde{v}^j(y))^2 dy \\
			&\geq \left( \int \eta^m(x - y) \xi^i \tilde g_{ij}(y) \tilde{v}^j(y) dy \right)^2 \\
			&= \left( \int \eta^m(x - y) \xi^i \tilde{v}_i(y) dy \right)^2 \\
			&= \left( \int \eta^m(x - y) \xi^i (u_i(y) + \omega_i(y)) dy \right)^2 \\
			&= \left\{ \xi^i (\eta^m * u_i)(x) + \xi^i \omega_i(x) + o\left( \frac{1}{m} \right) \right\}^2 \\
			&= \left\{ \xi^i  u_i^m(x) + \xi^i \omega_i(x) + o\left( \frac{1}{m} \right) \right\}^2 \\
			&= \langle \xi, \nabla^{\tilde g} u^m + X \rangle^2_{\tilde g} + o\left( \frac{1}{m} \right),
		\end{aligned}
		\]
		we can obtain
		\[
		\begin{aligned}
			(1) &\leq \kappa \int \eta^m(x - y) \left\{ \xi^i \tilde g_{ij}(y) \xi^j - \frac{1}{2} (\xi^i \tilde g_{ij}(y) \tilde{v}^j(y))^2 \right\} dy \\
			&\leq \kappa \left\{ \int \eta^m(x - y) \xi^i \tilde g_{ij}(y) \xi^j dy - \frac{1}{2}\langle \xi, \nabla^{\tilde g} u^m + X \rangle^2_{\tilde g} \right\} + o\left( \frac{1}{m} \right) \\
			&= \kappa \left\{ \langle \xi, \xi \rangle_{\tilde g} - \frac{1}{2} \langle \xi, \nabla^{\tilde g} u^m + X \rangle^2_{\tilde g} \right\} + o\left( \frac{1}{m} \right).
		\end{aligned}
		\]
		Note that
		\[
		\begin{aligned}
			(2) &= \int (\eta^m)^{\frac{1}{2}} \left( \tilde \Gamma_{ij}^k(y) - \tilde \Gamma_{ij}^k(x) \right) (\eta^m)^{\frac{1}{2}} u_k(y) \xi^i \xi^j dy \\
			&\leq \sum_{i,j,k} \left( \int \eta^m(x - y) \left( \tilde \Gamma_{ij}^k(y) - \tilde \Gamma_{ij}^k(x) \right)^2dy \right)^{\frac{1}{2}} \left( \int \eta^m(x - y) \left( u_k(y) \xi^i \xi^j \right)^2 dy \right)^{\frac{1}{2}} \\
			&= o\left( \frac{1}{m} \right)
		\end{aligned}
		\]
		and $ (3)= o\left( \frac{1}{m} \right)$.
		Therefore,
		\[
		\begin{aligned}
			(\operatorname{Hess}^{\tilde g} u^m + \nabla^{\tilde g} X)(\xi, \xi) &= \xi^i \left( u_{ij}^m - \tilde \Gamma_{ij}^k u_k^m - \partial_i X^k \tilde g_{kj} + X^k \tilde \Gamma_{ik}^l \tilde g_{lj} \right) \xi^j \\
			&\leq (1) + (2) + (3) \\
			&\leq \kappa \left\{ \langle \xi, \xi \rangle_{\tilde g} - \frac{1}{2} \langle \xi, \nabla^{\tilde g} u^m + X \rangle^2_{\tilde g} \right\} + o\left( \frac{1}{m} \right).
		\end{aligned}
		\]
		
		Now, let
		\[
		\psi_m(t) = \frac{1}{2} \langle \nabla^{\tilde g} u^m + X, \nabla^{\tilde g} u^m + X \rangle_{\tilde g} (\gamma_m(t)) - 1=\frac{1}{2}\|Du_m(\gamma_m(t))+\omega \|_{\tilde g^*}^2-1
		\]
		with
		\[
		\left\{ \begin{array}{l} \dot{\gamma}(t) = \nabla^{\tilde g} u^m(\gamma_m(t)) + X(\gamma_m(t)), \, t \in [0, T], \\ \gamma(0) = x_0. \end{array} \right.
		\]
		Then,
		\[
		\begin{aligned}
			\psi_m'(t) &= (\operatorname{Hess}^{\tilde g} u^m + \nabla^{\tilde g} X)(\nabla^{\tilde g} u^m + X, \nabla^{\tilde g} u^m + X)(\gamma_m(t)) \\
			&\leq \kappa \left\{ \langle \nabla^{\tilde g} u^m + X, \nabla^{\tilde g} u^m + X \rangle_{\tilde g} - \frac{1}{2} \langle \nabla^{\tilde g} u^m + X, \nabla^{\tilde g} u^m + X \rangle^2_{\tilde g} \right\} + o\left( \frac{1}{m} \right) \\
			&= \kappa \langle \nabla^{\tilde g} u^m + X, \nabla^{\tilde g} u^m + X \rangle_{\tilde g} \left( - \psi_m(t) \right) + o\left( \frac{1}{m} \right) \\
			&\leq -\tilde{C} \psi_m(t) + o\left( \frac{1}{m} \right)
		\end{aligned}
		\]
		for some constant \( \tilde{C} > 0 \). Therefore,
		\[
		\psi_m(T) \leq e^{-\tilde{C} T} \psi_m(0) + \int_0^T e^{\tilde{C}(s - T)} o\left( \frac{1}{m} \right) ds.
		\]
		Since $\psi_m(0)  =\frac{1}{2}\|Du_m(x_0)+\omega \|_{\tilde g^*}^2-1 \to \frac{1}{2}\|p_0+\omega \|_{\tilde g^*}-1=b<0 $, we have $\liminf\limits_{m\to \infty} \psi_m(t) \leq e^{-\tilde CT}b < 0$.
		
		Set $\bar H(x,p):=\tilde H(x,p)-1=\frac{1}{2}\|p+\omega \|_{\tilde g^*}^2-1$. According to the Arzel\`a--Ascoli theorem, we may assume, up to a subsequence, that the curves \(\gamma_m(t)\) converge uniformly to a Lipschitz curve. From Lemma \ref{uniform semiconcave}, we deduce that this limiting curve is a solution to \eqref{differential inclusion}. Together with Proposition \ref{pro:uniqueness}, it follows that the entire sequence \(\gamma_m(t)\) converges uniformly to the Lipschitz curve \(\gamma(t)\) for \(t \in [0,T]\). Moreover, by Lemma \ref{uniform semiconcave}, we have that 
		$$  0>\liminf\limits_{m\to \infty} \psi_m(t)=\liminf\limits_{m\to \infty} \bar H(\gamma_m(t),Du^m(\gamma_m(t)))\geq \bar H(\gamma(t),\bar p^\#_u(\gamma(t))),$$
		with $\bar p^\#_u:=\arg\min\{\bar H(x,p)\mid p\in D^+u(x)\}$
		
		We now claim that $\bar H(\gamma(t),\bar p^\#_u(\gamma(t)))<0$ if and only if $\gamma(t)\in \SING(u)$. Indeed, we have $\gamma(t)\in \SING(u)$ if and only if $H(\gamma(t), p^\#_u(\gamma(t)))<c[L]$ and $H(\gamma(t), p^\#_u(\gamma(t)))<c[L]$ if and only if 
		$\bar H(\gamma(t),\bar p^\#_u(\gamma(t)))<0$. Thus, the proof follows from the claim.
	\end{proof}

	\begin{Cor}\label{cor:cut to sing}
		Under the assumptions in Theorem \ref{thm:singular_persistence}, for any initial point $x_0 \in W \cap \text{\rm Cut}\,(u)$, the generalized gradient flow $\gamma(t)$ with $\gamma(0) = x_0$ satisfies:
		\[
		\gamma(t) \in \SING(u) \quad \text{for all } t >0 \text{ where the flow is defined.}
		\]
	\end{Cor}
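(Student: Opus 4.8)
The plan is to split according to whether $x_0$ lies in $\SING(u)$: the singular case is immediate from Theorem~\ref{thm:singular_persistence}, and the remaining ``differentiable cut point'' case is excluded by showing that the gradient flow would then be a genuinely calibrated curve, which cannot start at a cut point.

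If $x_0\in W\cap\SING(u)$, Theorem~\ref{thm:singular_persistence} directly gives $\gamma(t)\in\SING(u)$ for all $t\ge 0$, in particular for all $t>0$. So assume henceforth that $x_0\in\Cut(u)\setminus\SING(u)$, i.e.\ $u$ is differentiable at $x_0$ while $\tau_u(x_0)=0$. First I would observe that $S:=\{t\ge0:\gamma(t)\in\SING(u)\}$ is upward closed along the flow: if $t_1\in S$ then $\gamma(t_1)\in\SING(u)\subset W$ by Lemma~\ref{lem:singular_set_energy_gap}, the shifted curve $t\mapsto\gamma(t_1+t)$ is again a generalized gradient flow issuing from a point of $W\cap\SING(u)$ (the equation \eqref{eq:ggf_intro} being autonomous), and Theorem~\ref{thm:singular_persistence} forces it to stay in $\SING(u)$; hence $\gamma(t)\in\SING(u)$ for every $t\ge t_1$ in the domain. (One could instead invoke the cut-locus propagation of \cite{CCHW2024} to know $\gamma(t)\in\Cut(u)$ throughout, but this is not needed here.)

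Next, suppose for contradiction that $\gamma(t_\ast)\notin\SING(u)$ for some $t_\ast>0$. By the upward-closedness just established, $\gamma(s)\notin\SING(u)$ for every $s\in[0,t_\ast]$, so $u$ is differentiable at each $\gamma(s)$ and hence $D^+u(\gamma(s))=\{Du(\gamma(s))\}$ by semiconcavity (Remark~\ref{basic property of sc}); therefore the flow equation \eqref{eq:ggf_intro} reduces to $\dot\gamma(s)=H_p(\gamma(s),Du(\gamma(s)))$ for a.e.\ $s\in[0,t_\ast]$. Since $Du$ is continuous at every point of differentiability of $u$, the right-hand side depends continuously on $s$, so in fact $\gamma\in C^1([0,t_\ast])$ and the identity holds for all $s$. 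Using the Fenchel identity $\langle p,H_p(x,p)\rangle=H(x,p)+L(x,H_p(x,p))$ (the supremum defining $H$ being attained at $v=H_p(x,p)$) together with the Hamilton--Jacobi equation $H(\gamma(s),Du(\gamma(s)))=c[L]$, which holds at every differentiability point of the weak KAM solution $u$, I would compute
\[
\frac{d}{ds}u(\gamma(s))=\bigl\langle Du(\gamma(s)),\dot\gamma(s)\bigr\rangle=c[L]+L\bigl(\gamma(s),\dot\gamma(s)\bigr),\qquad s\in[0,t_\ast],
\]
whence $u(\gamma(t_\ast))-u(x_0)=\int_0^{t_\ast}\bigl(L(\gamma,\dot\gamma)+c[L]\bigr)\,ds$. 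Thus $\gamma|_{[0,t_\ast]}$ is a $C^1$ curve of positive length issuing from $x_0$ along which $u$ is calibrated, which forces $\tau_u(x_0)\ge t_\ast>0$ and contradicts $x_0\in\Cut(u)$. Hence no such $t_\ast$ exists, i.e.\ $\gamma(t)\in\SING(u)$ for all $t>0$ in the domain of the flow.

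The bookkeeping --- the case split, the upward-closedness of $S$, and the reduction of \eqref{eq:ggf_intro} to the classical characteristic along a run of differentiable points --- is routine. The point that most needs care, and the one genuinely new ingredient beyond Theorem~\ref{thm:singular_persistence}, is the calibration step: recognizing that on a stretch of differentiability the generalized gradient flow is forced to be a classical $L$-calibrated characteristic, so that initiating it at a cut point is self-contradictory. Here the Fenchel duality between $H$ and $L$, the identity $H(\cdot,Du)=c[L]$, and a consistent normalization of the Ma\~n\'e constant $c[L]$ in the definition of $\tau_u$ all enter.
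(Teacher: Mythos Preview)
Your argument is correct and takes a genuinely different route from the paper's. The paper's proof invokes Theorem~5.7 of \cite{CCHW2024}, which guarantees that for any generalized gradient flow starting in $\Cut(u)$ the set $\{t\ge 0:\gamma(t)\in\SING(u)\}$ has interior dense in $[0,\infty)$; one then picks $t_1\in(0,t)$ with $\gamma(t_1)\in\SING(u)$ and applies Theorem~\ref{thm:singular_persistence} together with the uniqueness of Proposition~\ref{pro:uniqueness} to conclude. Your approach avoids this external density result entirely: you argue directly that a stretch $[0,t_\ast]$ of the flow along which $u$ remains differentiable is forced, via Fenchel duality and the pointwise identity $H(\cdot,Du)=c[L]$, to be an $(L+c[L])$-calibrated $C^1$ curve, which is incompatible with $\tau_u(x_0)=0$. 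This is more self-contained and arguably more transparent, relying only on Theorem~\ref{thm:singular_persistence}, the uniqueness in Proposition~\ref{pro:uniqueness}, and standard weak KAM facts, whereas the paper's route is shorter but imports a nontrivial result from \cite{CCHW2024}. The one point to watch, which you rightly flag, is the normalization: the paper's displayed definition of $\tau_u$ omits the additive $c[L]$ in the calibration identity, so your integrated equality $u(\gamma(t_\ast))-u(x_0)=\int_0^{t_\ast}(L+c[L])\,ds$ matches the standard weak KAM convention rather than the formula as literally written; this is almost certainly a typo in the paper's definition, but it would be worth stating explicitly which convention you are using.
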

	
	\begin{proof}
		This follows directly from Theorem 5.7 in \cite{CCHW2024} combined with the uniqueness result in Proposition \ref{pro:uniqueness}. By Theorem 5.7 in \cite{CCHW2024}, the set $\{ t \in [0,+\infty) : \gamma(t) \in \mathrm{Sing}(u) \}$ has interior dense in $[0,+\infty)$. Thus for any $t>0$, there exists $t_1 \in (0,t)$ such that $\gamma(t_1) \in \mathrm{Sing}(u)$. The uniqueness property established in Proposition \ref{pro:uniqueness} then implies $\gamma(t) \in \mathrm{Sing}(u)$.
	\end{proof}

	\bibliographystyle{plain}
	\bibliography{mybib}
\end{document}